\newtheorem{theorem}{Theorem}
\newtheorem{lemma}[theorem]{Lemma}
\newtheorem{proposition}[theorem]{Proposition}
\newtheorem{remark}{Remark}
\newtheorem{assumption}{Assumption}
\newcommand{\blue}{\color{black}}
\newcommand{\black}{\color{black}}
\newcommand{\At}{\calA^{(t)}}
\newcommand{\As}{\calA^{(s)}}
\newcommand{\Ts}{T^{(s)}}
\newcommand{\Tt}{T^{(t)}}
\newcommand{\calL}{\mathcal{L}}
\newcommand{\calW}{\mathcal{W}}
\newcommand{\calG}{\mathcal{G}}
\newcommand{\calV}{\mathcal{V}}
\newcommand{\calE}{\mathcal{E}}
\newcommand{\calP}{\mathcal{P}}
\newcommand{\calA}{\mathcal{A}}
\newcommand{\calO}{\mathcal{O}}
\newcommand{\xp}{x^{(p)}}
\newcommand{\yst}{y^{(s,t)}}
\newcommand{\jn}{\text{join}}
\newcommand{\cs}{\text{cross}}
\newcommand{\bbeta}{\boldsymbol{\beta}}
\newenvironment{keywords}{%
\vspace{1ex}\noindent\small
\textbf{Keywords:}\ }%
{\par\vspace{1ex}}
\newcolumntype{M}[1]{>{\centering\arraybackslash}m{#1}}
\newcolumntype{R}[1]{>{\raggedleft\arraybackslash}m{#1}}
\title{A lasso-alternative to Dijkstra's algorithm for identifying short paths in networks}
\author{%
  Anqi Dong\thanks{Division of Decision and Control Systems and Department of Mathematics, KTH Royal Institute of Technology, SE-100 44 Stockholm, Sweden (\texttt{anqid@kth.se}).}%
  \and Amirhossein Taghvaei\thanks{Department of Aeronautics and Astronautics, University of Washington, Seattle, WA 98195, USA (\texttt{amirtag@uw.edu}).}%
  \and Tryphon T. Georgiou\thanks{Department of Mechanical and Aerospace Engineering, University of California, Irvine, CA 92697, USA (\texttt{tryphon@uci.edu}).}%
}
\begin{document}

\maketitle

\begin{abstract}
We revisit the problem of finding the shortest path between two selected vertices of a graph and formulate this as an $\ell_1$-regularized regression -- Least Absolute Shrinkage and Selection Operator (lasso). We draw connections between a numerical implementation of this lasso-formulation, using the so-called LARS algorithm, and a more established algorithm known as the bi-directional Dijkstra. Appealing features of our formulation include the applicability of the Alternating Direction of Multiplier Method (ADMM) to the problem to identify short paths, and a relatively efficient update to topological changes.
\end{abstract}

\begin{keywords}
Graph Theory, static optimization problems, routing algorithms,  control over networks, transportation.
\end{keywords}

% \begin{MSCcodes}
% 05C38, 05C50, 05C85, 62J07, 90C25
% \end{MSCcodes}

\section{Introduction}
The problem of finding the shortest path between two vertices of a graph has a long history \cite{wiener1873ueber,tarry1895probleme} and a wide range of applications \cite{waxman1988routing,mortensen1995intelligent}. A classical algorithm to determine a shortest path is due to Dijkstra \cite{dijkstra1959note}. Since Dijkstra's early work, a variety of alternative methods have been developed to reduce complexity and address variants of the problem~\cite{bast2016route,ahuja1990faster,chandrasekar1994self,van1976design,fredman1987fibonacci}. A salient issue in applications involving graphs of considerable size, which motivated the present work, is that identifying a shortest path is not absolutely essential, whereas identifying a reasonably short path may suffice~\cite{waxman1988routing,potamias2009fast}.

Driven by such considerations and inspired by the effectiveness of convex optimization techniques to address large-scale problems~\cite{boyd2004convex,boyd2011distributed}, we introduce a formulation of the shortest path problem as an $\ell_1$-regularized regression, known as the {\em Least Absolute Shrinkage and Selection Operator }({\em lasso})~\cite{tibshirani1996regression}. This type of regularization/relaxation is ubiquitous in inverse problems throughout engineering, statistics, and mathematics, with a rich library of numerical implementations for high-dimensional problems -- an added incentive for the approach that we advocate.

Thus, a main contribution of this work is to formulate the shortest path problem as $\ell_1$-regularized regression, a convex optimization problem \cite{tibshirani2013lasso,tibshirani1996regression}. This formulation is, to the best of the authors' knowledge, original.

A second contribution stems from exploring at depth a popular $\ell_1$-regularized regression solver, known as {\em Least Angle Regression} ({\em LARS}), as applied to the shortest path problem. Specifically, we have shown (Theorem \ref{thm:thm1}) that the LARS implementation of our ``lasso-shortest-path'' formulation replicates a defining feature of the so-called {\em bi-directional Dijkstra algorithm}, to iteratively build two shortest-path trees, starting from the two specified vertices and until the two trees connect.
Through this connection, we present a new perspective of Dijkstra's algorithm that is completely different than the common presentation as a greedy algorithm or a dynamic programming viewpoint~\cite{sniedovich2006dijkstra}.

Lastly, we explore the {\em Alternating Direction of Multiplier Method} (ADMM)\cite{boyd2011distributed,boyd2004convex}, and a variant (InADMM) \cite{yue2018implementing}, for reducing the computational cost in identifying an approximate shortest path for very large graphs. The benefits of ADMM and its variants include: {\bf (i)} it admits distributed implementation, initialized with any suitable path ({\em warm-start}), if one is available. Such a feature speeds up convergence, is especially useful when a short path needs to be updated following topological changes in the graph. {\bf (ii)} it can be adopted as a solver for the parallel lasso \eqref{eq:parallel}, whereby multiple-pair and all-pair shortest path problems can be considered and formulated in the same manner. The ADMM algorithm proposed here is completely different than earlier proposals on the subject, e.g., the self-stabilizing approach in \cite{chandrasekar1994self}. The proposed algorithm aims at identifying an (approximate) shortest path between specified vertices, allowing for better computational complexity and relatively efficient updates to topological changes.

\blue
The present work builds on our earlier conference publication~\cite{dong2020lasso}, where we first presented the idea of relaxing the search for a shortest path via $\ell_1$-regularized regression, and pointed out analogies to the bi-directional Dijkstra algorithm. 
{\blue In~\cite{dong2020lasso}, in particular, we
sketched the LARS–Dijkstra connection and illustrated the idea via small synthetic examples. The present work aims to bring out the significance of the approach, sharpen the link between the two viewpoints, and provide detailed streamlined justification of points of contact. To this end, we herein provide detailed analysis of crossing/joining times (in the parlance of the LARS algorithm) and how/when these are manifested in the present setting (Section \ref{sec:LARS}), explore uniqueness of shortest path under both strong and weak regularity assumptions (Section \ref{sec:Lasso}), examine scalability, and discuss links to practical Dijkstra implementations from the new perspective (Section \ref{sec:proximal}). A focus in our exposition is on how to capitalize on fast numerical solvers to improve on speed and robustness. In particular, we detail insights in utilizing proximal lasso solvers (ADMM and an inexact variant based on sparse Cholesky and preconditioned conjugate gradients, including warm starts, over-relaxation, and stopping criteria). We report results on image grids, road networks, and random geometric graphs at scales from $10^3$ to $10^4$ nodes, with all parameters listed for reproducibility (Section \ref{sec:proximal}).}
\black

The outline of the work is as follows. Section~\ref{sec:prelim}  introduces notation along with basic concepts and a brief account of Dijkstra's algorithm. Section~\ref{sec:Lasso} casts the search for short paths in a network as a convex optimization problem as discussed. Section~\ref{sec:LARS} details the algorithmic steps for updating state-values on edges, that turn out to coincide with the so-called {\em lasso solution} in the LARS algorithm. Section~\ref{sec:relation}  highlights the commonality of features between the LARS algorithm and bi-directional Dijkstra algorithm. Finally, Section~\ref{sec:proximal} explores the application of the ADMM method to our lasso formulation, and highlights its relevance in identifying short, but not necessarily shortest, paths in very large graphs.

\section{Preliminaries } \label{sec:prelim} 
\subsection{Graph theoretic notations and definitions}
Throughout, we consider a weighted {\em undirected}  graph $\calG$ that is connected and has no self-loops or multi-edges.  We  write $\calG = (\calV, \calE, \calW)$,  where $\calV = \{v_1, \dots , v_n\}$ is the set of vertices/nodes, $\calE = \{e_{1}, \dots, e_{m}\}$ is the set of edges, and $\calW=\{w_1,\ldots,w_m\}$ a set of weights corresponding to the edges. 
We will consistently use $n=|\mathcal V|$ and $m=|\mathcal E|$ for the cardinality of these two sets. When labeling edges, we also use the notation $(v_i,v_j)$ for the edge that connects vertices $v_i$ and $v_j$, without significance to the order.

However, as is common, in defining the {\em incidence matrix} of the graph, denoted by $D(\calG)$ an arbitrary but fixed orientation is assigned to edges that has no bearing on the results.
To this end, the incidence matrix
is defined as the $n\times m$ matrix with $(i,j)$th entry
\begin{equation*}
[D]_{ij} =
\begin{cases}
+1 \ \   &\text{if the $i$th vertex is the tail of edge $e_{j}$},\\
-1  \ \   &\text{if the $i$th vertex is the head of edge $e_{j}$},\\
0  \ \   &\text{otherwise}.
\end{cases}  
\end{equation*}
It is convenient to define the {\em weight matrix}  $W=\text{diag}(w_1,\ldots,w_m)$ as the diagonal matrix formed by the weights in $\mathcal W$, consistent with the ordering in $\mathcal E$.

A path from vertex $v_s$ to vertex $v_t$ is a sequence of connected edges 
$$p=\{(v_{i_0},v_{i_1}),(v_{i_1},v_{i_2}),\ldots, (v_{i_{l-1}},v_{i_\ell})\}$$ 
that ``starts'' at $v_{i_0}=v_s$ and ``terminates'' at $v_{i_l}=v_t$.  An alternative representation of the path, which now encodes edge-orientation that is consistent with that in specifying $D$, is in terms of the {\em incidence vector} $\xp$. This is an $m$-dimensional vector defined as follows: 
the $i$th entry $(\xp)_i$ is $+1$, or $-1$, depending on whether an edge $(v_{i_{k-1}},v_{i_k})$ (for some $k\in\{1,\ldots,\ell\}$) in the path is the $i$th edge in $\mathcal E$ and is listed with orientation consistent or not with the tail/head designation in specifying $D$, respectively; if the $i$th edge is not in the path, $(\xp)_i=0$. The {\em length of the path} is defined as the sum of edge-weights, i.e.,
$$
\text{length}(p) \triangleq \sum_{e_i \in p} w_i = \|W\xp\|_1,
$$
where  $\|\cdot\|_1$ denotes the $\ell_1$-norm.

A \blue connected \black graph is said to be a {\em tree} if it has no cycle, i.e., it has no path where $s=t$. If $\mathcal G$ is connected, there is always a subgraph which is a tree. When $\mathcal G$ is a tree, $m=n-1$ and there is a unique path from any given vertex to any other.
Any vertex can be designated as {\em root}, and the structure of graph is encapsulated by all paths connecting vertices to the root ($n-1$ paths). The $(n-1)\times (n-1)$ matrix of incidence vectors of all such paths is referred to as {\em path matrix} (often with reference to the root) and denoted by $P_{v_1}$, or simply $P$, when the root is clear from the context. Interestingly, $P$ is closely connected to the incidence matrix $D$. This is the content of the following lemma, which is key for results in Section \ref{sec:relation} but also of independent interest.

\begin{lemma}\label{lem:incidence}
Let $\mathcal G$ be a tree rooted at $v_1$ with $n$ vertices and $P$ its path matrix. The pseudoinverse of its  incidence matrix $D$, denoted as $D^{+}$, is~\footnote{Throughout, $\mathbbm{1}_k$ denotes the $k$-column vector with entries equal to $1$, and $I_k$ the $k\times k$ identity matrix.}
\begin{equation*}
D^{+} =
\begin{bmatrix}
\displaystyle -\frac{1}{n}P\mathbbm{1}_{n-1}, & PJ
\end{bmatrix},
\end{equation*}
where $\displaystyle J=(I_{n-1}-\frac{1}{n} \mathbbm{1}_{n-1} \mathbbm{1}_{n-1}^T)$.
\end{lemma}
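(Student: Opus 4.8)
The plan is to verify that the proposed matrix, which I will call $X=\begin{bmatrix} -\frac{1}{n}P\mathbbm{1}_{n-1} & PJ\end{bmatrix}$, satisfies the defining (Moore--Penrose) properties of $D^{+}$. Two facts will be used throughout. First, the incidence matrix of a tree on $n$ vertices is $n\times(n-1)$ and has full column rank $n-1$ (standard in algebraic graph theory), and its left null space is spanned by $\mathbbm{1}_n$ because every column of $D$ has exactly one $+1$ and one $-1$, so that $\mathbbm{1}_n^T D=0$. Second, there is a ``telescoping'' identity for the incidence vectors of paths that ties $P$ to $D$.

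First I would establish that bridge between $P$ and $D$. For any path $p$ from $v_s$ to $v_t$, summing the columns of $D$ with the signs prescribed by $\xp$ telescopes to $D\xp=e_s-e_t$, the difference of the corresponding standard basis vectors of $\mathbb{R}^n$; one checks term by term that this holds irrespective of the fixed orientation chosen in defining $D$, since reversing an edge flips both the sign in $\xp$ and the sign of the corresponding column of $D$. Applying this to each column of $P$ (the unique path from $v_i$ to the root $v_1$) and ordering coordinates so that $v_1$ comes first yields the compact identity $DP=\begin{bmatrix} -\mathbbm{1}_{n-1}^T \\ I_{n-1}\end{bmatrix}$.

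Next I would compute $DX$ directly from this identity, together with the elementary facts $J\mathbbm{1}_{n-1}=\frac{1}{n}\mathbbm{1}_{n-1}$ and $\mathbbm{1}_{n-1}^T J=\frac{1}{n}\mathbbm{1}_{n-1}^T$. Block by block the product collapses to $DX=I_n-\frac{1}{n}\mathbbm{1}_n\mathbbm{1}_n^T$, i.e.\ the orthogonal projection $\Pi$ onto the column space of $D$ (the orthogonal complement of $\mathbbm{1}_n$). This is manifestly symmetric, which supplies one Moore--Penrose condition at no cost, and since $\mathbbm{1}_n^T D=0$ we immediately get $DXD=\Pi D=D-\frac{1}{n}\mathbbm{1}_n(\mathbbm{1}_n^T D)=D$. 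Because $D$ has full column rank, left-multiplying $DXD=D$ by $(D^TD)^{-1}D^T$ forces $XD=I_{n-1}$, from which the two remaining conditions ($XD$ symmetric and $XDX=X$) follow at once; hence $X=D^{+}$. As an independent sanity check one verifies $X\mathbbm{1}_n=0$, consistent with the rows of $D^{+}$ lying in the row space of $D$.

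The step demanding the most care---and the main obstacle---is pinning down the signs in the telescoping identity $D\xp=e_s-e_t$ and threading them consistently through the root-first coordinate ordering, so that $DP$ takes precisely the stated block form. Once that identity is secured, the remainder is routine block algebra with $J$ and $\mathbbm{1}$.
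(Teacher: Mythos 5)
Your proof is correct, and it differs from the paper in that the paper gives no argument at all for this lemma---it simply cites Bapat (\emph{Graphs and Matrices}, Theorem~2.10 and Lemma~2.15). Your route is a self-contained verification of the Moore--Penrose axioms: the telescoping identity $D x^{(p)}=e_s-e_t$ gives $DP=\bigl[\begin{smallmatrix}-\mathbbm{1}_{n-1}^T\\ I_{n-1}\end{smallmatrix}\bigr]$ (with the root listed first), from which the block computation with $J\mathbbm{1}_{n-1}=\tfrac1n\mathbbm{1}_{n-1}$ and $\mathbbm{1}_{n-1}^TJ=\tfrac1n\mathbbm{1}_{n-1}^T$ yields $DX=I_n-\tfrac1n\mathbbm{1}_n\mathbbm{1}_n^T$; symmetry of this projector, $\mathbbm{1}_n^TD=0$, and full column rank of the tree incidence matrix then force $XD=I_{n-1}$ and close out all four conditions. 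Each of these steps checks out. What your argument buys beyond the citation is that it makes explicit the identity $DD^{+}=I_n-\tfrac1n\mathbbm{1}_n\mathbbm{1}_n^T$, which the paper in fact invokes separately (again via Bapat) in the derivation of the joining times in Appendix~B, so your derivation would make that later appendix self-contained as well. The one point to state carefully in a final write-up is the orientation convention for the columns of $P$ (vertex-to-root rather than root-to-vertex), since reversing it flips the overall sign of the formula; you flag this yourself, and the paper's phrase ``paths connecting vertices to the root'' supports the sign you chose.
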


\begin{proof}
See \cite[Theorem 2.10 \& Lemma 2.15]{bapat2010graphs}.
\end{proof}

\subsection{Shortest path problem and Dijkstra's  algorithm}\label{sec:Dikstra}
Let $\calP_{s,t}$ denote the set of all paths between $v_s$ and $v_t$. This set is non-empty because the graph is connected. 
The shortest path problem is to find a path with minimum length over all the paths between $v_s$ and $v_t$, i.e.,
\begin{equation}\label{eq:shortest-path-problem}
{\rm arg}\min_{p\in \mathcal P_{s,t}}~\text{length}(p).
\end{equation}
The minimum value is known as the {\em distance} between $v_s$ and $v_t$. A well-known search algorithm - Dijkstra's algorithm, has been proposed for this problem.

Dijkstra's algorithm \cite{dijkstra1959note} begins with the ``starting'' vertex $v_s$, and initially assigns a distance of $0$ to $v_s$ and $+\infty$ to all other vertices. It iteratively labels the vertex with the lowest {\em distance estimate} as {\em visited} and updates the distance estimates of its neighbors that have not yet been visited ({\em unvisited}). The distance estimates are updated by summing up the distances of visited vertices and the weights of the edges linking these vertices to their unvisited neighbors. Dijkstra's algorithm terminates when $v_t$ is visited and produces the shortest path from source vertex $v_s $ to others vertices in the form of shortest-path tree.

The essential feature of Dijkstra’s algorithm is that it iteratively constructs the shortest-path tree rooted at $v_s$ to all the visited vertices before reaching the target $v_t$. Similarly, bi-directional Dijkstra algorithm constructs two shortest-path trees rooted at $v_s$ and $v_t$ and terminates when the two trees connect.

Later on in Section~\ref{sec:relation}, we will point out analogies between the bi-directional Dijkstra algorithm and properties of the LARS algorithm applied to the shortest-path problem advocated herein.

\section{Linear programming formulation}\label{sec:Lasso}
We now cast the shortest path problem~\eqref{eq:shortest-path-problem} as a linear program. To this end, we will use a well-known technique for finding sparse solutions to linear equations by minimizing the $\ell_1$ norm of a vector as a surrogate for the count of its non-vanishing entries \cite{tibshirani1996regression}.

In our setting, constraints are  expressed in terms of $D$ (incidence, also constraint matrix) and $x$, the incidence vector of a sought path $p$ from $v_s$ to $v_t$, in that,
\begin{equation}\label{eq:Dxy}
p \in \calP_{s,t} \quad \Rightarrow\quad D\xp  = \yst.
\end{equation}
Here, $\yst_i=\mathbf{1}_{\{s\}}-\mathbf{1}_{\{t\}}$ is the {\em indicator vector} in $\mathbb R^n$ of a {\em virtual edge} directly connecting $v_s$ to $v_t$; the path together with the virtual edge form a {\em closed cycle}. Throughout, $\mathbf{1}_{\{\cdot\}}$ denotes the indicator function of set $\{\cdot\}$.
Note that linear combinations $x=ax^{(p_1)} + (1-a)x^{(p_2)}$ with $a\in (0,1)$ of incidence vectors of two distinct paths $p_1$ and $p_2$ between $v_s$ and $v_t$, also satisfy the constraint $Dx=\yst$. 
That is, although an exact correspondence between the two sides of~\eqref{eq:Dxy} does not hold, it does hold between the {\em shortest path} and a corresponding integer vertex of the polytope defined by the constraint matrix $D$.
Thus, we {\em propose}
\begin{subequations}
\begin{align}\label{eq:linprog}
{\rm arg}\min_{x\in \mathbb R^m}~\|Wx\|_1,\quad \text{s.t.}\quad Dx = \yst
\end{align}
as a way to solve the shortest path problem.

To gain insight as to the nature of the minimizer,
problem \eqref{eq:linprog} can be recast as the linear program:
\begin{align}\label{eq:linprog1} 
{\rm arg}\min_{\xi\geq 0}~ \blue \sum_{i=1}^m w_i\,(x_i^+ + x_i^-),\black \quad \text{s.t.}\quad \mathcal D\xi  = \yst
\end{align}
\end{subequations}%
with 
$\mathcal D:= [D, -D]$ and $\xi\in\mathbb R^{2m}$. The solutions to \eqref{eq:linprog} and \eqref{eq:linprog1} correspond via 
\blue $\xi \;=\; \begin{bmatrix} x^+ \\ x^- \end{bmatrix} \ge 0$,
\black  where $x_+$ ($x_-$, resp.) is the vector of positive (negative, resp.) entries of $x$, setting zero for the negative (positive, resp.) entries, i.e., \blue 
$$
x^+ := \max(x,0),\qquad x^- := \max(-x,0),\qquad x = x^+ - x^-,\quad \mbox{and} \quad |x| = x^+ + x^-.
$$ \black 
The constraint matrix $\mathcal D$ in \eqref{eq:linprog1} is {\em totally unimodular} (i.e., all minors have determinant in $\{0,\pm 1\}$) \cite[Lemma 2.6]{bapat2010graphs}, and therefore, application of \cite[Theorem 11.11 (Unimodularity Theorem)]{ahuja1988network} \blue guarantees that there exists an integral optimal solution; if the shortest path is unique, then the optimal solution is unique and \black
$\{0,\pm 1\}$-valued. Thus, $x$ in \eqref{eq:linprog} corresponds to a valid incidence vector.

\subsection{Lasso formulation}
Returning to \eqref{eq:linprog}, rewritten in the form 
\begin{equation}
{\rm arg}\min_{\beta} \; \Big\{\|\beta\|_1\,\mid \, \|Q\beta - y\|^2_2=0 \Big \}   \tag{\ref{eq:linprog}$^\prime$}%\textbf{'}}
\end{equation}
in new variables $\beta=Wx$, $y=\yst$, and $Q=DW^{-1}$,
leads us a relaxation as the $\ell_1$-regularized regression
\begin{equation}
{\boldsymbol{\beta}}(\lambda):={\rm arg}\min_{\beta \in \mathbb R^m}~ \frac{1}{2}\|y-Q\beta\|_2^2 + \lambda\|\beta\|_1, \label{eq:lasso}
\end{equation}
with (regularization parameter) $\lambda >0$. The formulation~\eqref{eq:lasso} is known as {\em lasso} \cite{tibshirani2013lasso}.

The limit ${\boldsymbol{\beta}}_0:=\lim_{\lambda\to 0}{\boldsymbol{\beta}}(\lambda)$ from~\eqref{eq:lasso}, for $\lambda>0$, provides the indicator vector $\boldsymbol{x}_0=W^{-1}{\boldsymbol{\beta}}_0$ of a path \blue under uniqueness and regularity (non-degeneracy) of the shortest path\black. 
\blue
Under these conditions, there exists $\bar\lambda>0$ such that $\mathrm{supp}\big(\boldsymbol{\beta}(\lambda)\big)=\mathrm{supp}\big(\boldsymbol{\beta}_0\big)$ for all $\lambda\in(0,\bar\lambda)$.
\black
For $\lambda>0$, ${\boldsymbol{\beta}}(\lambda)$ may not correspond to a path. However, due to continuity and the fact that $\boldsymbol{x}_0$ in the limit must be $\{0,\pm 1\}$-valued, for sufficiently small $\lambda$, ${\boldsymbol{x}(\lambda)}=W^{-1}{\boldsymbol{\beta}}(\lambda)$ reveals the shortest path (e.g., by rounding the values to the nearest integer). \blue In practice, applying a small threshold to $\boldsymbol{x}(\lambda)$ followed by a connectivity check reliably recovers the path. \black

Thus, the lasso formulation represents a viable and attractive approach for solving the shortest path problem. Interestingly, as we show in Section~\ref{sec:relation}, the LARS algorithm -- a popular solver for lasso~\eqref{eq:lasso}, shares features of the bi-directional Dijkstra algorithm. Most importantly, the lasso formulation~\eqref{eq:lasso}, as discussed in Section~\ref{sec:proximal}, allows the use of proximal optimization methods for obtaining satisfactory approximations of the shortest path in large graph settings.

\subsection{Uniqueness of the  lasso solution}
A sufficient condition for uniqueness of solution to \eqref{eq:lasso} is that $\text{rank}(Q)=m$, the size of $\beta$ and number of edges ~\cite[Lemma 2]{tibshirani2013lasso}.
However, recall that $WQ=D$, the incidence matrix. It follows that $\text{rank}(Q)=m$ only holds when the graph is a tree (or possibly, a disjoined set of trees, cf.~\cite[Theorem 2.3]{bapat2010graphs}). 
Evidently, such an assumption is too restrictive, also since the shortest path problem in this case becomes trivial.

Herein, we introduce a fairly general sufficient condition for the uniqueness of solution to \eqref{eq:lasso} (as we claim next), that is in fact generic, for generic weights.

\begin{assumption}[\bf Strong (terminal–to–all) uniqueness]\label{assum:uni}
The shortest path between vertex $v_s$ and any other vertex is unique, and the same applies to $v_t$. 
\end{assumption}

We note that ${\boldsymbol{\beta}}(\lambda)$ from \eqref{eq:lasso} turns out to be piece-wise linear (see Section \ref{sec:LARS}). The values of $\lambda$ where the slope changes are referred to as {\em breakpoints}. With this in place, the implications of the assumption to our problem can be stated as follows.

\begin{lemma}[\bf Uniqueness]\label{lemma:uni}
Under Assumption~\ref{assum:uni}, Problem~\eqref{eq:lasso} admits a unique solution for all $\lambda > 0$.
\end{lemma}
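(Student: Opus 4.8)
The plan is to reduce the claim to a rank condition on an active submatrix and then exploit the incidence structure of $D$ together with Assumption~\ref{assum:uni}. First I would invoke the standard lasso uniqueness criterion \cite[Lemma 2]{tibshirani2013lasso}: for any fixed $\lambda>0$ the fitted value $Q\beta$ is always unique, hence so is the residual $r=y-Q\beta$ and the \emph{equicorrelation} set $\mathcal A(\lambda):=\{\,i:\ |q_i^\top r|=\lambda\,\}$ of columns $q_i$ of $Q$ attaining the dual bound. That same criterion makes the minimizer $\beta(\lambda)$ \emph{unique} as soon as the columns $\{q_i: i\in\mathcal A(\lambda)\}$ are linearly independent. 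Since $WQ=D$ with $W\succ0$ diagonal, independence of these columns is equivalent to independence of the corresponding columns of the incidence matrix $D$, which holds if and only if the associated edges contain no cycle \cite[Thm.~2.3]{bapat2010graphs}. Thus the whole statement reduces to one combinatorial assertion: \emph{for every $\lambda>0$ the equicorrelation edge set $\mathcal A(\lambda)$ is a forest.}

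Next I would give a graph-theoretic reading of $\mathcal A(\lambda)$ through the lasso dual. Writing $r=\lambda u$, the optimality (KKT) conditions read $Q^\top u\in\partial\|\beta\|_1$, i.e. $|q_i^\top u|\le 1$ for all $i$ with equality exactly on $\mathcal A(\lambda)$. For an edge $e=(a,b)$ one has $q_e^\top u=(u_a-u_b)/w_e$, so dual feasibility $\|Q^\top u\|_\infty\le1$ says precisely that $u$ is a node potential that is $1$-Lipschitz with respect to the weighted graph metric, $|u_a-u_b|\le w_e$ for every edge, and that $\mathcal A(\lambda)$ is exactly the set of \emph{tight} edges $|u_a-u_b|=w_e$. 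Completing the square in the dual objective further identifies $u$ with the Euclidean projection of $y/\lambda$ onto this Lipschitz polytope, which makes $u$ unique and pins down $\mathcal A(\lambda)$ intrinsically.

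The bridge from this picture to Assumption~\ref{assum:uni} is the elementary observation that a \emph{monotone run of tight edges is a geodesic}: if $u$ strictly decreases by exactly $w_e$ across each edge of a path from $x$ to $z$, that path has length $u_x-u_z$, while $1$-Lipschitzness gives $\mathrm{dist}(x,z)\ge u_x-u_z$, so the path is a shortest $x$--$z$ path. I would then argue by contradiction: suppose $\mathcal A(\lambda)$ contains a cycle. Orienting each tight edge in the direction of decreasing $u$ turns the cycle into an acyclic orientation with interior local maxima and minima; extracting the maximal monotone tight segments produces geodesics between interior extrema, which one then aims to reroute—along tight edges running back to the terminals—into two distinct shortest paths from $v_s$ or from $v_t$, contradicting Assumption~\ref{assum:uni}. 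No tight cycle could then exist, so $\mathcal A(\lambda)$ is a forest for every $\lambda>0$.

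I expect this last step to be the main obstacle. The delicate point is the \emph{anchoring} of an arbitrary tight cycle to the two roots $v_s,v_t$: the extreme vertices of a tight cycle need not be $v_s$ or $v_t$, so converting a geodesic ambiguity on the cycle into an ambiguity of a shortest path \emph{from a terminal} requires more than the Lipschitz/projection structure alone. The clean way to supply this is to show that the tight set is organized as two geodesic trees rooted at $v_s$ and $v_t$—the very bi-directional structure made explicit in Section~\ref{sec:relation}—so that every tight edge lies on a monotone tight path back to a terminal and a cycle immediately duplicates such a path. Establishing this rooted-tree description, equivalently that the primal flow $x=W^{-1}\beta$ runs downhill along $u$ with its sources and sinks controlled by $v_s,v_t$, is the technical heart of the argument; once it is in place, the forest property and hence uniqueness follow for all $\lambda>0$.
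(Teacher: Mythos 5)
Your opening reduction is exactly the paper's: both invoke \cite[Lemma 2]{tibshirani2013lasso} to reduce uniqueness to linear independence of the active columns of $Q$, and both observe that since $WQ=D$ with $W$ positive diagonal, this is equivalent to the active edge set containing no cycle. From that point on, however, your argument has a genuine gap, and it is the one you yourself flag at the end. Your dual-potential picture (tight edges $|u_a-u_b|=w_e$, monotone tight runs are geodesics) only yields that a tight cycle contains shortest paths \emph{between its own local extrema of $u$}, which need not be $v_s$ or $v_t$; Assumption~\ref{assum:uni} only forbids duplicate shortest paths \emph{from a terminal}, so no contradiction follows without anchoring the cycle to the roots. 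You correctly identify that what is needed is the rooted two-tree description of the tight set, but you do not prove it --- you state that ``once it is in place'' the result follows. That conditional clause is precisely where the content of the lemma lives.

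The paper closes this gap not by a dual argument but by the LARS induction of Section~\ref{sec:relation}: Propositions~\ref{prop:joining} and~\ref{prop:crossing} give closed-form joining and crossing times in terms of root-to-vertex distances, and Lemmas~\ref{lemma:addactive} and~\ref{lemma:cross} then show inductively over the breakpoints that the active set at every $\lambda>0$ is the disjoint union of two shortest-path trees rooted at $v_s$ and $v_t$ (no cycle can join because such edges have joining time zero, no two edges join simultaneously by Assumption~\ref{assum:uni}, and no edge ever crosses out). The appendix proof of Lemma~\ref{lemma:uni} then simply reads off that $Q_{\calA}$ is block-diagonal with two tree incidence blocks, hence full column rank. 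So your proposal is not wrong in direction --- the dual route could in principle be made to work and would be an interesting alternative --- but as written it assumes the structural fact that constitutes the entire technical burden of the proof, and supplies no mechanism (inductive or otherwise) for establishing it.
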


It is important to note, \blue in general, that the quadratic expression in \eqref{eq:lasso} may not be \black strictly convex \blue --  when $\mathrm{rank}(Q)\le n-1<m$, $Q$ has a nontrivial null space, whereas on trees where  $\mathrm{rank}(Q)=m$, strict convexity is ensured. \black The key idea in proving uniqueness under the conditions of the lemma requires a discussion of the LARS algorithm that is explained next. Hence the proof is deferred to Appendix~ \ref{apdx:proof-lemma-3.2}.

\blue
\begin{assumption}[\bf Weak ($s$–$t$) uniqueness]\label{assum:st-uni}
The shortest $s$--$t$ path is unique, whereas shortest paths from $s$ (or from $t$) to other vertices may be non-unique.
\end{assumption}

\begin{remark}[\bf Consequences under Assumption~\ref{assum:st-uni}]
Under Assumption~\ref{assum:st-uni}, problem in~\eqref{eq:lasso} has a minimizer for every $\lambda>0$. For almost all $\lambda$ the minimizer is unique. Non-uniqueness can occur only at parameter values where ties create multiple geodesics from $s$ or $t$ to intermediate vertices. At such values, LARS may admit simultaneous edge arrivals and the active set can exhibit crossings.
\end{remark}
% \begin{remark}[\bf Consequences under Assumption~\ref{assum:st-uni}]
% Under Assumption~\ref{assum:st-uni}, Problem~\eqref{eq:lasso} admits unique minimizers for all $\lambda>0$, but LARS may fail at values of $\lambda$ where ties create multiple geodesics to intermediate vertices, as multiple edges to enter simultaneously and the active set may exhibit crossings.
% \end{remark}
\black

\section{The LARS algorithm}\label{sec:LARS}
\subsection{Karush-Kuhn-Tucker (KKT) conditions}
The solution $\bbeta(\lambda)$ of~\eqref{eq:lasso} must satisfy the  {\em KKT condition}{ ~\cite[Section 2.1]{tibshirani2013lasso}}
\begin{align}\label{eq:KKT}
Q^T\Big(y - Q\beta(\lambda)\Big) = \lambda \gamma,
\end{align}
where the vector $\gamma$ is in the sub-differential of $\|\beta(\lambda)\|_1$, with $j$th component given by
\begin{align*}
\gamma_{j} \in
\begin{cases}
\Big\{\text{sign}\big(\beta_{j}(\lambda)\big)\Big\}~~  &\text{if}\;\beta_{j}(\lambda) \neq 0,\\[1em]
\left[ -1, 1 \right] ~~ &\text{if}\; \beta_{j}(\lambda)=0.
\end{cases}
\end{align*}
\noindent The KKT condition in \eqref{eq:KKT} motivates us to divide the indices $\{1,\ 2,\dots, m\}$ into two set: an {\em active} set (also, {\em equicorrelation} set \cite{tibshirani2013lasso})  $\calA$ and, a {\em non-active} set $\calA^c$, i.e.,
\begin{align}\label{eq:active-set-def}
\calA \triangleq \Big\{j \ | \ \bbeta_{j}(\lambda) \neq 0\Big\}, \qquad \calA^{c} \triangleq \Big\{j\ |\ \bbeta_{j}(\lambda) = 0\Big\}.
\end{align}
Let $\bbeta_\calA(\lambda)$ denote the vector $\bbeta(\lambda)$ with the non-active (or equivalently, zero) entries removed and, likewise, $Q_\calA$ be the matrix $Q$ with the columns corresponding to the non-active set removed,  and thus fully \blue depends \black on $\bbeta(\lambda)$. Then, the KKT condition~\eqref{eq:KKT} can be expressed as
\begin{subequations}\label{eq:KKT-new}
\begin{align}\label{eq:KKT-active}
&Q_j^T\Big(y-Q_\calA\bbeta_\calA(\lambda)\Big) = s_j \lambda,\quad \forall j \in \calA,\\\label{eq:KKT-non-active}
&\Big|Q_j^T\Big(y-Q_\calA\bbeta_\calA(\lambda)\Big)\Big| \leq \lambda,\phantom{xx} \forall j \in \calA^c, 
\end{align}
\end{subequations}
where $Q_j$ denotes the $j$th column of $Q$ and the {\em sign vector}
\begin{align}\label{eq:sign-vec}
s := \text{sign}\left(Q_\calA^{T}\Big(y - Q_\calA\bbeta_\calA(\lambda )\Big) \right) = \text{sign}\left(\bbeta_\calA \right),    
\end{align}
with $j$th element the sign ($\pm 1$) of the $j$th entry of $\bbeta_{\mathcal A}$.

\subsection{The LARS algorithm}\label{sec:lars-alg}
The LARS algorithm, as formulated in ~\cite[Section 3.1]{tibshirani2013lasso} to solve lasso, finds the {\em solution path} of $\bbeta(\lambda)$ that meets the KKT condition~\eqref{eq:KKT} for all $\lambda>0$. The algorithm is initialized with $\lambda_0=\infty$, $\calA_0=\emptyset$, and $s_0=\emptyset$. The solution path $\bbeta_{\calA}(\lambda)$, in \eqref{eq:betaA} below, is computed for decreasing $\lambda$, and is piece-wise linear and continuous with breakpoints $\lambda_0>\lambda_1>\ldots>0$. Breakpoints are successively computed at each iteration of the algorithm, and the linear segment of $\bbeta_{j}(\lambda)$ is determined to satisfy the element-wise KKT condition as detailed in~\eqref{eq:KKT-new}. As $\lambda$ crosses breakpoints, the active (non-active) set \eqref{eq:active-set-def} and the sign vector \eqref{eq:sign-vec} are updated accordingly.

We now detail the $k$th iteration of the LARS algorithm, initializing $\lambda = \lambda_k$, $\calA = \calA_k$, $s = s_k$, and seeking the next breakpoint $\lambda_{k+1}$. The lasso variable $\bbeta_{\calA_{k}}(\lambda)$, as a function of $\lambda$, is calculated as the minimum $\ell_2$-norm solution of~\eqref{eq:KKT-active}, and is given by:
\begin{align}\label{eq:betaA}
\bbeta_{\calA_k}(\lambda)\!=\!
\Big(Q_{\calA_k}^{T}Q_{\calA_k}\Big)^{+}\Big(Q_{\calA_k}^{T}y-\!\lambda s_k\Big)
\!= \!a^{(k)} - b^{(k)} \lambda,
\end{align} 
where $(\cdot)^+$ denotes the Moore–Penrose pseudoinverse\footnote{\blue Herein, $W$ is positive diagonal and $D$ has full column rank (the incidence on a tree). Then we have $(DW^{-1})^{+}=(W^{-1})^{-1}D^{+}=W\,D^{+}$.} and  
\begin{align}\label{eq:a-b}
\begin{aligned}
a^{(k)} &:= \Big(Q_{\calA_k}^{T}Q_{\calA_k}\Big)^{+}Q_{\calA_k}^{T}y,\\ 
b^{(k)} &:= \Big(Q_{\calA_k}^{T}Q_{\calA_k}\Big)^{+}s_k.   
\end{aligned}
\end{align}

The next breakpoint $\lambda_{k+1}$ is determined as the largest value at which 
the KKT condition $\big(Q_j^T(y - Q_{\calA_k}\beta_{\calA_k}(\lambda)) = \lambda\gamma_{j}\big)$ is violated by $\lambda$. Such a violation occurs in two circumstances (``crossing/joining'' in the language of \cite{tibshirani2013lasso}), either \eqref{eq:KKT-active} or \eqref{eq:KKT-non-active} fails.

{\noindent \bf {\em i})} The ``joining'' case is when condition~\eqref{eq:KKT-non-active} is violated for some $j\in \calA^c_k$, i.e., 
$$\Big|Q_j^T\Big(y-Q_{\calA_k} \bbeta_{\calA_k}(\lambda)\Big)\Big| \leq \lambda
$$ 
no longer holds. 
For each index $j\in \calA^c_k$, this happens at $\lambda= t^{\jn}_j$ given by 
\begin{equation}\label{eq:join}
t^{\jn}_{j,k} = \frac{Q_j^T\Big(Q_{\calA_k}a^{(k)}-y\Big)}{Q_j^TQ_{\calA_k}b^{(k)} \pm 1},
\end{equation}
where the choice $\pm$ is the one for which $t^{\jn}_{j,k}\in[0,\lambda_{k}]$.
We set ``joining time'' 
$$
\lambda_{k+1}^{\jn} := \max_{j \in \calA_k^c} \; \Big\{t^{\jn}_{j,k}\Big\}.
$$

{\noindent \bf {\em ii})} The ``crossing'' case is when condition~\eqref{eq:KKT-active} is violated for some $j\in \calA_k$  so that \blue one of the elements \black of $\bbeta_{\calA_k}(\lambda)$ crosses zero (changes its sign), i.e. $a^{(k)}_j - \lambda b^{(k)}_j=0$ for some $\lambda<\lambda_k$. 
For each index $j\in \calA_k$, the crossing happens at $\lambda=t^{\cs}_j$ given by  
\begin{equation}\label{eq:cross}
t_{j,k}^{\cs} = 
(a^{(k)}_j/b^{(k)}_j) \cdot \mathbf{1}_{\{0< (a^{(k)}_j/b^{(k)}_j) < \lambda_k\}}.
\end{equation}
We set ``crossing time'' 
$$
\lambda_{k+1}^{\cs} = \max_{j \in \calA_k} \Big\{t_{j,k}^{\cs}\Big\}.
$$
The next breakpoint is
\begin{equation}\label{eq:next-break}
\lambda_{k+1}=\max \Big\{\lambda_{k+1}^{\jn}, \lambda_{k+1}^{\cs}\Big\}.
\end{equation}
If the joining occurs, the joining index is added to the active set, and the sign vector is updated. If a crossing happens, the crossing index is removed from the active set. The overall algorithm is \blue summarized as follows \black.
\begin{algorithm}[htb!]
\caption{\bf (LARS algorithm for the lasso ~\eqref{eq:lasso})}
\begin{algorithmic}[1]
\renewcommand{\algorithmicrequire}{\textbf{Input:}}
\renewcommand{\algorithmicensure}{\textbf{Output:}}
\REQUIRE matrix $Q=DW^{-1}$, vector $y=\yst$.
\ENSURE  incidence vector $\boldsymbol{x}_{0}\!=\!W^{-1}\bbeta_{0}$, distance $\|\bbeta_{0}\|_1$.
\STATE  $k=0$, $\lambda_{0}=\infty$, $\calA=\emptyset$, $s=0$, $a^{(0)} =0$ and $b^{(0)}=0$. 
\WHILE{$\lambda_{k}>0$} 
\STATE Compute the joining time $\lambda_{k+1}^{\jn}$~\eqref{eq:join} for $j\in \calA^c_k$.
\STATE Compute the crossing time $\lambda_{k+1}^{\cs}$~\eqref{eq:cross} for $j\in \calA_k$.
\STATE Compute $\lambda_{k+1}$ according to~\eqref{eq:next-break} and
\begin{enumerate}[label=\roman*)]
  \item if join happens, i.e., $\lambda_{k+1}=\lambda^{\jn}_{k+1}$, add the joining index $j$ to $\calA_k$ and its sign to $s_k$;
  \item if cross happens, i.e., $\lambda_{k+1}=\lambda^{\cs}_{k+1}$, remove the crossing index $j$ from $\calA_k$ and its sign from $s_k$.
\end{enumerate}
\STATE $k = k+1$.
\STATE Update $Q_{\calA_{k}}$. according to $\calA_{k}$ and $\calA^{c}_{k}$.
\STATE Compute $a^{(k)}$ and $b^{(k)}$ according to~\eqref{eq:a-b}.
\STATE Set $\bbeta_{\calA_k}  = a^{(k)} - \lambda_kb^{(k)}$ and $\bbeta_{\calA_k^c}=0$.
\ENDWHILE
\end{algorithmic}
\label{alg:LARS}
\end{algorithm}

\subsection{Numerical example}
We consider Nicholson's graph \cite[p.~6]{pohl1969bidirectional} and seek the shortest path between vertex $v_1$ and $v_9$. The iterations of the LARS algorithm (with breakpoints $\lambda_0=+\infty$, $\lambda_1 = 1/2$, $\lambda_2=1/3$, $\lambda_3 = 1/5$, $\lambda_4 = 0.1489$) are depicted in Fig.~\ref{fig:Nicholson}. It is observed that, at each iteration, edges highlighted in the same color as the corresponding $\lambda$ are added to the active set and are never removed prior to the last step $\lambda_{5} = 0$, as we prove later on in Lemma~\ref{lemma:cross}. The algorithm terminates after four iterations when $\lambda_5=0$ and a path between vertex $v_1$ and $v_9$ is formed. The element-wise path of the lasso solution $\bbeta(\lambda)$ as $\lambda$ decreases, is drawn in Fig.~\ref{fig:lassopath}.

\begin{figure}[htb!]
\begin{subfigure}{1\columnwidth}
\centering
\scalebox{1.2}{
\begin{tikzpicture}
[process/.style={circle,draw=blue!50,fill=blue!20,thick,inner sep=0pt,minimum size=4.5mm},
target/.style={circle,draw=blue!50,fill=red!40,thick,inner sep=0pt,minimum size=4.5mm}]
\node (n1) at (0,0) [target] {$v_1$};
\node (n2) at (1.08,0.96) [target] {$v_2$};
\node (n3) at (1.6,-0.065) [target] {$v_3$};
\node (n4) at (1.08,-1.28) [process] {$v_4$};			
\node (n5) at (2.88,1.28) [process] {$v_5$};
\node (n6) at (4.48,-0.128) [target] {$v_6$};
\node (n7) at (3.84,-1.28) [process] {$v_7$};
\node (n8) at (4.8,0.96) [process] {$v_8$};
\node (n9) at (5.76,0) [target] {$v_9$};  
				
\draw[line width=.8mm][purple] (n1) to [bend left=10] node[midway,above](){3} (n2);
\draw[thick] (n1) to [bend right=3] node[midway,above](){6} (n3);
\draw[thick] (n1) to [bend right=20] node[midway,left](){7} (n4);
\draw[thick] (n2) to [bend left=10] node[midway,above](){4} (n5);
\draw[line width=1.2mm][blue] (n2) to [bend left=15] node[midway,right](){1} (n3);
\draw[line width=1.6mm][teal] (n3) to [bend right=15] node[midway,above](){2} (n6);
\draw[thick] (n4) to [bend right=20] node[midway,above](){3} (n6);
\draw[thick] (n4) to [bend right=20] node[midway,below](){4} (n7);
\draw[line width=1.2mm][blue] (n5) to [bend left=8] node[midway,above](){1} (n8);
\draw[thick] (n6) to [bend left=20] node[midway,left](){1} (n8);
\draw[line width=0.4mm][orange] (n6) to [bend right=20] node[midway,above](){2} (n9);
\draw[thick] (n7) to [bend right=15] node[midway,below](){5} (n9);
\draw[line width=0.4mm][orange] (n8) to [bend left=10] node[midway,above](){2} (n9);
\end{tikzpicture}}
\begin{minipage}{0.95\columnwidth}
\caption{Nicholson's graph: Edges are added according to breakpoints $\lambda$ in the same color as in Fig.~\ref{fig:lassopath}.} 
\label{fig:Nicholson}
\end{minipage}
\end{subfigure}

\centering
\begin{subfigure}{1\columnwidth}
\centering
\includegraphics[width=0.7\columnwidth]{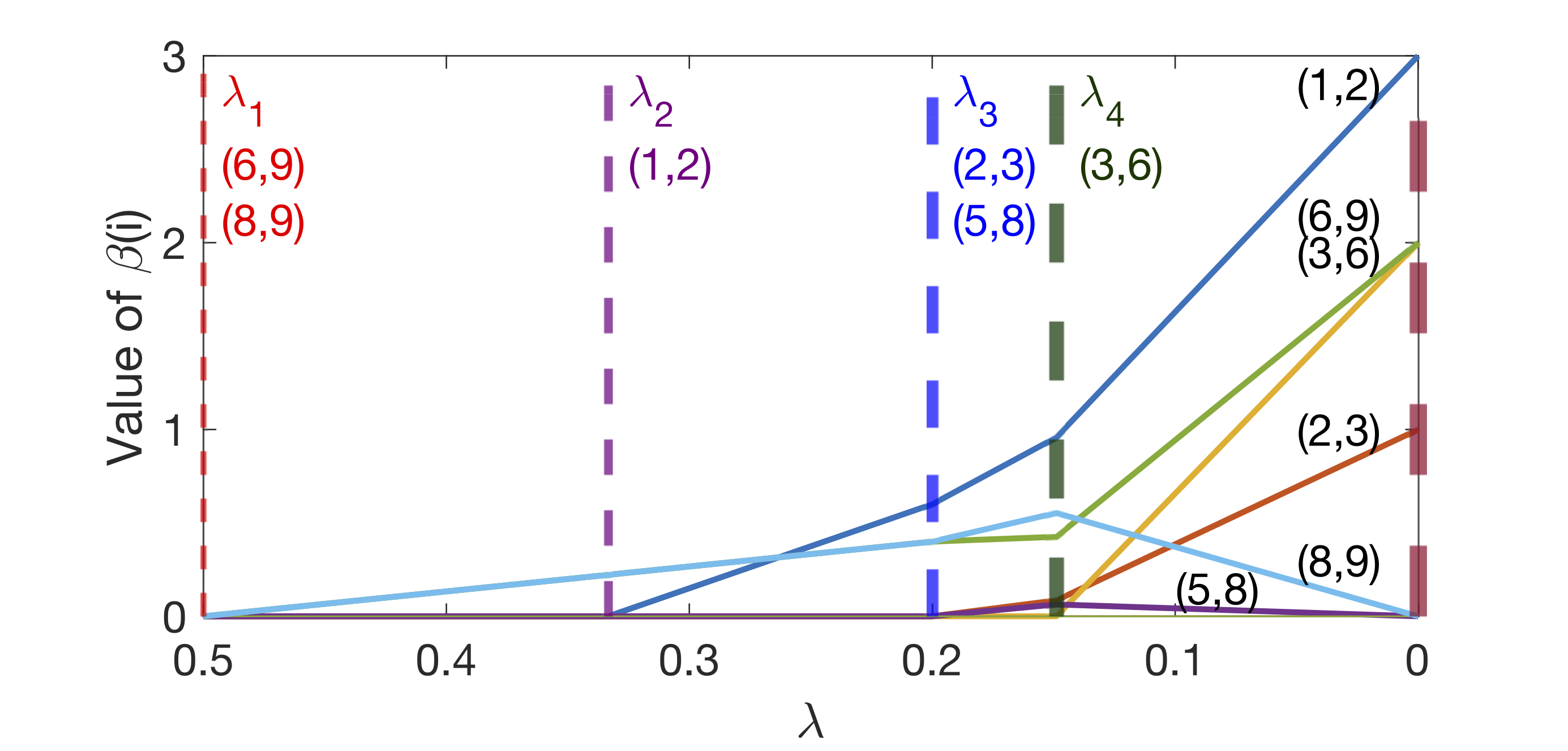}
\begin{minipage}{1\columnwidth}
\caption{Entries $(\bbeta(\lambda))(i)$ (corresponding to edges) \blue drawn \black as functions of $\lambda$. Values of $\lambda$, where the active edge-set changes, are marked with dashed vertical lines.}
\label{fig:lassopath}
\end{minipage}
\end{subfigure}  
\caption{The LARS algorithm successively identifies a set of active edges while reducing the tuning/control parameter $\lambda$. A vector $\beta(\lambda)$ with information of the length-contribution of the active edge-set is also successively being updated.} 	
\label{fig:example}
\end{figure}

Example~\ref{fig:example} highlights the correlation between the LARS algorithm and Dijkstra's algorithm. Specifically, the LARS algorithm builds two shortest-path trees, with roots at vertices $v_1$ and $v_9$. This echoes the steps in the bi-directional Dijkstra algorithm discussed in Section~\ref{sec:Dikstra}. In Theorem~\ref{thm:thm1}, we highlight similarities between the LARS and Dijkstra's algorithms.

\section{Relationship between lasso and Dijkstra}\label{sec:relation}
Both the LARS as well as the bi-directional Dijkstra algorithm iteratively construct shortest-path trees with roots at $v_s$ and $v_t$, and terminate when the two trees meet. We prove our main result below by induction. The induction hypothesis is specified next.
\begin{assumption}[\bf Induction hypothesis]\label{assum:hypothesis}
At $(k-1)$th iteration of LARS, the edges in the active set $\calA_{k-1} = \calA_{k-1}^{(s)} \cup \calA_{k-1}^{(t)}$, where $\calA_{k-1}^{(s)}$ and $\calA_{k-1}^{(t)}$ are disjoint subsets of edges forming trees on vertices $T^{(s)}_{k-1}, T^{(t)}_{k-1}\subset\calV$, rooted at $v_s$ and $v_t$, respectively. The two trees are the shortest-path trees from the roots. Moreover, crossing does not occur at this iteration, i.e., no edges are removed from the active set.
\end{assumption}

Our induction starts with the {\em base case} $k=1$, where the active set is empty and both trees consist of a single root vertex, $T^{(s)}_0=\{s\}$ and $T^{(t)}_0=\{t\}$. The crossing does not occur since the active set is empty. The proof is based on the following two propositions that provide simplified expressions for the joining and crossing times, derived from Lemma~\ref{lem:incidence} and Assumption~\ref{assum:hypothesis} that the graph is a tree. The proofs are given in Appendices~\ref{apdx:t-cross-derivation} and~\ref{apdx:t-join-derivation}. 
\blue 
Throughout this section and throughout the Appendices, we use $D$, $D^{+}$, $Q$, and $Q^{+}$ to mean $D_{\mathcal A_k}$, $D_{\mathcal A_k}^{+}$, $Q_{\mathcal A_k}$, and $Q_{\mathcal A_k}^{+}$ on the active edge set $\mathcal A_k$ of the current trees $T_k^{(s)}$ and $T_k^{(t)}$, with $Q := D W^{-1}$. Since each active component is a tree, the incidence has full column rank. Hence, for any tree $T$, we have $(D_T W_T^{-1})^{+}=W_T D_T^{+},$
where $W_T$ is diagonal with positive entries.
% Throughout this section and Appendices, $D$, $D^{+}$, $Q$, and $Q^{+}$ mean $D_{\mathcal A_k}$, $D_{\mathcal A_k}^{+}$, $Q_{\mathcal A_k}$, and $Q_{\mathcal A_k}^{+}$ computed on the active trees $T_k^{(s)}$ and $T_k^{(t)}$. Since each is a tree, the corresponding incidence has full column rank, so $(D_T W_T^{-1})^{+}=W_T D_T^{+}$ applies to these components.
\black

\subsection{{  Joining and crossing times}}
\begin{proposition}[{\bf Joining time}]\label{prop:joining}
For the edge $e_j=(v_1,v_2)$ where $e_j\in \calA_k^c$, the {  element-wise} joining time is  
\begin{align*} 
t^{\jn}_{j,k} = 
\begin{cases}
\begin{aligned}
0 \hspace{1.45in} \text{if}~(v_1,v_2) \in \Omega_k^2  \cup  {T^{(s)}_k}^2 \cup {T^{(t)}_k}^2 &\\[2em]
\Big(|T^{(s)}_k|l^{(s)}_{v_2} \! - \!\! \displaystyle \sum_{v\in T^{(s)}_k} l^{(s)}_v\Big)^{-1} \hspace{0.65in} \text{if}~(v_1,v_2) \in {T^{(s)}_k} \times \Omega_k &\\[1em]
\Big(|T^{(t)}_k|l^{(t)}_{v_2} \! - \!\! \displaystyle\sum_{v\in T^{(t)}_k} l^{(t)}_v\Big)^{-1} \hspace{0.7in} \text{if}~(v_1,v_2) \in {T^{(t)}_k} \times \Omega_k &\\[1em]
\Big(|T^{(s)}_k|+|T^{(t)}_k|\Big)/ \blue \Delta \black \hspace{0.9in} \text{if}~(v_1,v_2) \in {T^{(s)}_k} \times{T^{(t)}_k} &
\end{aligned}
\end{cases}
\end{align*}
where $\Omega_k= \calV \setminus (T^{(s)}_k \cup T^{(t)}_k)$, $l^{(s)}_v$ and $l^{(t)}_v$ denote the distance of vertex $v$ to the root $s$ and $t$ respectively, and 
$$
\blue \Delta := |\Ts_k| |\Tt_k| l^{(s)}_t - |\Tt_k| \sum_{v\in \Ts_k}l_v^{(s)} - |\Ts_k|\sum_{v\in \Tt_k}l_v^{(t)}. \black
$$
% $$
% \gamma =|\Ts_k| |\Tt_k| l^{(s)}_t - |\Tt_k| \sum_{v\in \Ts_k}l_v^{(s)} - |\Ts_k|\sum_{v\in \Tt_k}l_v^{(t)}. 
% $$
\end{proposition}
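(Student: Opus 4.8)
The plan is to start directly from the joining-time expression~\eqref{eq:join} and reduce it to two vertex-potentials supported on the current trees. First I would rewrite the two ingredients of~\eqref{eq:join} geometrically. On the active set $\calA_k$ the columns of $Q_{\calA_k}=D_{\calA_k}W_{\calA_k}^{-1}$ span the same space as $D_{\calA_k}$, and $D_{\calA_k}$ has full column rank (it is the incidence matrix of two disjoint trees, by Assumption~\ref{assum:hypothesis}); hence $Q_{\calA_k}a^{(k)}=Q_{\calA_k}(Q_{\calA_k}^TQ_{\calA_k})^{-1}Q_{\calA_k}^T y$ is the orthogonal projection of $y$ onto $\mathrm{range}(D_{\calA_k})$. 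Writing $r$ for the residual $y-Q_{\calA_k}a^{(k)}$, the numerator of~\eqref{eq:join} is $Q_j^T(Q_{\calA_k}a^{(k)}-y)=-Q_j^T r$, while the denominator term is $Q_{\calA_k}b^{(k)}=Q_{\calA_k}(Q_{\calA_k}^TQ_{\calA_k})^{-1}s_k=:\phi$. Since $Q_j=w_j^{-1}(e_{v_1}-e_{v_2})$ for the standard basis vectors $e_{v_1},e_{v_2}$, everything is then governed by the two scalars $r_{v_1}-r_{v_2}$ and $\phi_{v_1}-\phi_{v_2}$, and $t^{\jn}_{j,k}=-Q_j^T r/(Q_j^T\phi\pm 1)$.

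Second, I would evaluate $r$ and $\phi$ explicitly. Because $\mathrm{range}(D_{\calA_k})$ consists of the vectors supported on $\Ts_k\cup\Tt_k$ that sum to zero on each tree separately, projecting $y=\mathbf{1}_{\{s\}}-\mathbf{1}_{\{t\}}$ yields the piecewise-constant residual $r=\tfrac1{|\Ts_k|}\mathbbm{1}_{\Ts_k}-\tfrac1{|\Tt_k|}\mathbbm{1}_{\Tt_k}$, which vanishes on $\Omega_k$. The heart of the argument is the identity $\phi_v=\bar l^{(s)}-l^{(s)}_v$ on $\Ts_k$, $\phi_v=l^{(t)}_v-\bar l^{(t)}$ on $\Tt_k$, and $\phi_v=0$ on $\Omega_k$, where $\bar l^{(s)}:=|\Ts_k|^{-1}\sum_{v\in\Ts_k}l^{(s)}_v$ and similarly for $\bar l^{(t)}$. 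To prove it I would use the block structure to treat each tree in isolation and apply Lemma~\ref{lem:incidence}: with $\phi_T=(D_T^{+})^T W_T s_T$ and $D_T^{+}=[-\tfrac1n P\mathbbm{1},\,PJ]$, the vector $P_T^T(W_T s_T)$ has $v$-th entry equal to the signed sum of weights along the tree path from $v$ to the root. Invoking the shortest-path-tree hypothesis, each tree edge obeys $l_{\text{child}}=l_{\text{parent}}+w$, and the LARS sign is $s_j=\mathrm{sign}\big((D_{\calA_k}^{+}y)_j\big)$, the orientation of the tree flow (root-outward on $\Ts_k$, root-inward on $\Tt_k$). Tracking these signs shows $P_T^T(W_T s_T)$ equals $-(l^{(s)}_v)_v$ on $\Ts_k$ and $+(l^{(t)}_v)_v$ on $\Tt_k$; applying $J=I-\tfrac1n\mathbbm{1}\mathbbm{1}^T$ then recenters by the mean and produces the claimed $\phi$.

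Finally I would substitute into $t^{\jn}_{j,k}=-Q_j^T r/(Q_j^T\phi\pm 1)$ case by case, choosing in each case the sign $\pm$ that places $t^{\jn}_{j,k}$ in $[0,\lambda_k]$. When $v_1,v_2$ lie in the same region ($\Omega_k^2$, ${\Ts_k}^2$, or ${\Tt_k}^2$) the residual is constant there, so $r_{v_1}-r_{v_2}=0$, the numerator vanishes, and $t^{\jn}_{j,k}=0$. When $(v_1,v_2)\in\Ts_k\times\Omega_k$ I would set the tentative label $l^{(s)}_{v_2}:=l^{(s)}_{v_1}+w_j$; after clearing $w_j$ and using $|\Ts_k|\,\bar l^{(s)}=\sum_{v\in\Ts_k}l^{(s)}_v$, the expression collapses to $\big(|\Ts_k|l^{(s)}_{v_2}-\sum_{v\in\Ts_k}l^{(s)}_v\big)^{-1}$, and the case $\Tt_k\times\Omega_k$ is identical after the sign flip on $\Tt_k$. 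For $(v_1,v_2)\in\Ts_k\times\Tt_k$, writing the joined $s$--$t$ length as $l^{(s)}_t:=l^{(s)}_{v_1}+w_j+l^{(t)}_{v_2}$, the numerator becomes $-(|\Ts_k|^{-1}+|\Tt_k|^{-1})$ and the denominator $|\Ts_k|^{-1}\sum_v l^{(s)}_v+|\Tt_k|^{-1}\sum_v l^{(t)}_v-l^{(s)}_t$, which after multiplying through by $|\Ts_k||\Tt_k|$ is exactly $(|\Ts_k|+|\Tt_k|)/\Delta$.

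The main obstacle is the sign bookkeeping in the second paragraph: reconciling the three orientation conventions -- the fixed edge orientation in $D$, the tree-flow orientation that fixes $s_j$, and the path orientation encoded in $P$ -- so that $P_T^T(W_T s_T)$ reduces cleanly to $\pm$ the vertex distances. A secondary point to check is that the sign selection in the last paragraph indeed yields $t^{\jn}_{j,k}\ge 0$, which follows because any vertex outside $\Ts_k$ has tentative distance at least the maximum in-tree distance, hence at least $\bar l^{(s)}$. Once the potential identity is secured, the remaining case analysis is routine algebra.
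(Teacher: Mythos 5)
Your proposal is correct and follows essentially the same route as the paper's Appendix B: the block decomposition of $D_{\calA_k}$ into the two tree incidences, the explicit piecewise-constant residual $y-Q_{\calA_k}a^{(k)}$, the potential $Q_{\calA_k}b^{(k)}$ expressed through Lemma~\ref{lem:incidence} as centered root-to-vertex distances (the paper's $\calL^{(s)},\calL^{(t)}$), and the same three-way case analysis with $l^{(s)}_{v_2}=l^{(s)}_{v_1}+w_j$ and $l^{(s)}_t=l^{(s)}_{v_1}+w_j+l^{(t)}_{v_2}$. The projection-of-$y$ phrasing and the sign-bookkeeping concerns you flag are exactly the computations the paper carries out explicitly.
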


\begin{proposition}[\bf Crossing time]\label{prop:crossing}
For an edge $e_j=(v_1,v_2)$ where $e_j\in \calA_k$, the expression $a_{j}^{(k)}/b_j^{(k)}$ that appears in the definition of crossing time~\eqref{eq:cross} is
\begin{align*}
\frac{a_j^{(k)}}{b_j^{(k)}}\!=\!\!
\begin{cases}
\Big((|\Ts_k|/|R^{(s)}_j|)\!\!\!\!\sum\limits_{v \in R^{(s)}_j} \!\! l^{(s)}_v \!\! - \!\!\!\!\!\sum\limits _{v\in T^{(s)}_k} \!\!l^{(s)}_v\Big)^{-1},\qquad\text{if}~(v_1,v_2)\in {T^{(s)}_k}^2 \\[2em]
\Big((|\Tt_k|/|R^{(t)}_j|)\!\!\!\!\sum\limits _{v \in R^{(t)}_j} \!\! l^{(t)}_v \!- \!\!\!\!\!\sum\limits _{v\in T^{(t)}_k} l^{(t)}_v\Big)^{-1},\qquad\text{if}~(v_1,v_2)\in {T^{(t)}_k}^2 
\end{cases}
\end{align*}
where $R^{(s)}_j$ and $R^{(t)}_j$  are the subsets of vertices in the tree $T^{(s)}_k$ and $T^{(t)}_k$  respectively, whose path to the root contains the edge $e_j$. 
\end{proposition}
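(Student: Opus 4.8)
The plan is to exploit the block structure guaranteed by Assumption~\ref{assum:hypothesis} and reduce everything to the explicit tree-pseudoinverse of Lemma~\ref{lem:incidence}. Since $\calA_k=\calA_k^{(s)}\cup\calA_k^{(t)}$ is a pair of vertex-disjoint trees, the restricted incidence matrix $D_{\calA_k}$ is block-diagonal (up to all-zero rows for the vertices in $\Omega_k$), and hence so are $Q_{\calA_k}^{T}Q_{\calA_k}$ and its inverse. Consequently $a^{(k)}$ and $b^{(k)}$ split along the two trees, and for an edge $e_j\in {T^{(s)}_k}^2$ only the $s$-block contributes; the $t$-case is verbatim by symmetry. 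Each tree-block has full column rank, so the Moore--Penrose inverses are genuine inverses, and using the footnote identity $Q^{+}=W D^{+}$ together with $(D^TD)^{-1}=D^{+}(D^{+})^T$ I would write, on the $s$-tree, $a^{(k)}=W D^{+}\mathbf 1_{\{s\}}$ and $b^{(k)}=W D^{+}(D^{+})^{T}W s$, where $D$, $W$, $s$ now denote the restrictions to $T^{(s)}_k$ and the $-\mathbf 1_{\{t\}}$ part of $y$ lives entirely in the $t$-block.

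I would then substitute the closed form $D^{+}=\big[-\tfrac1N P\,\mathbbm{1}_{N-1},\ PJ\big]$ from Lemma~\ref{lem:incidence}, with $N:=|T^{(s)}_k|$, $J=I_{N-1}-\tfrac1N\mathbbm{1}_{N-1}\mathbbm{1}_{N-1}^{T}$, and $P$ the path matrix of the $s$-tree. Selecting the root column gives $a^{(k)}=-\tfrac1N W P\,\mathbbm{1}_{N-1}$. The combinatorial content enters through two readings of $P$: for each active edge $e_j$ one has $(P\,\mathbbm{1}_{N-1})_j=\sigma_j\,|R^{(s)}_j|$ and $(P\,\ell^{(s)})_j=\sigma_j\sum_{v\in R^{(s)}_j}l^{(s)}_v$, where $\sigma_j=\pm1$ is the orientation of $e_j$ (common to every descendant's path to the root) and $\ell^{(s)}$ is the vector of distances $l^{(s)}_v$ over non-root vertices. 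In particular $a^{(k)}_j=-\tfrac1N w_j\sigma_j|R^{(s)}_j|$, whose sign is $-\sigma_j$.

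The crux is the identity $P^{T}W s=-\ell^{(s)}$. This follows once the sign assignment $s_j=-\sigma_j$ is established for every active tree-edge: since $\bbeta_{\calA_k}=a^{(k)}-\lambda b^{(k)}$ does not change sign on the current segment (no crossing, by Assumption~\ref{assum:hypothesis}), $s_j=\mathrm{sign}(\bbeta_j)=\mathrm{sign}(a^{(k)}_j)=-\sigma_j$. Then for a non-root vertex $v$, $(P^{T}W s)_v=\sum_{j\in\mathrm{path}(v\to s)}\sigma_j w_j s_j=-\sum_{j\in\mathrm{path}(v\to s)}w_j=-l^{(s)}_v$, i.e.\ summing edge-weights along the unique path recovers the distance. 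Feeding this into $b^{(k)}=W D^{+}(D^{+})^{T}W s$, computing $(D^{+})^{T}W s=\big[\tfrac1N\textstyle\sum_v l^{(s)}_v;\ -J\ell^{(s)}\big]$, and using $J^{2}=I-\tfrac{N+1}{N^{2}}\mathbbm{1}_{N-1}\mathbbm{1}_{N-1}^{T}$ to reduce $J^{2}\ell^{(s)}$, the two $P\,\mathbbm{1}$ contributions combine and I obtain $b^{(k)}=W\big[\tfrac1N(\textstyle\sum_{v\in T^{(s)}_k}l^{(s)}_v)\,P\mathbbm{1}_{N-1}-P\,\ell^{(s)}\big]$, hence $b^{(k)}_j=w_j\sigma_j\big[\tfrac1N(\sum_{v\in T^{(s)}_k}l^{(s)}_v)|R^{(s)}_j|-\sum_{v\in R^{(s)}_j}l^{(s)}_v\big]$.

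Finally I would form the ratio: the common factor $w_j\sigma_j$ cancels, leaving $a^{(k)}_j/b^{(k)}_j=\big(\tfrac{|T^{(s)}_k|}{|R^{(s)}_j|}\sum_{v\in R^{(s)}_j}l^{(s)}_v-\sum_{v\in T^{(s)}_k}l^{(s)}_v\big)^{-1}$, exactly the first case; the second case is identical with $s\mapsto t$. The main obstacle I anticipate is the sign/orientation bookkeeping---pinning down $s_j=-\sigma_j$ and the resulting identity $P^{T}W s=-\ell^{(s)}$, which is precisely what converts the linear-algebraic expressions into the purely metric quantities $l^{(s)}_v$. Once that identity is in hand, the remaining work (the $J^{2}$ reduction and the cancellation of $w_j\sigma_j$) is routine.
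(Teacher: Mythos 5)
Your proposal is correct and follows essentially the same route as the paper's Appendix~A proof: block-decomposing $D_{\calA}$ over the two disjoint trees, invoking Lemma~\ref{lem:incidence} for $D^{+}=[-\tfrac1N P\mathbbm{1},\,PJ]$, writing $a=WD^{+}y$ and $b=WD^{+}(D^{+})^{T}Ws$, and reading off $|R_j|$ and $\sum_{v\in R_j}l_v$ from the rows of the path matrix. The only difference is cosmetic --- you make explicit the sign bookkeeping $s_j=-\sigma_j$ yielding $P^{T}Ws=-\ell^{(s)}$, which the paper simply builds into the definition of $\calL^{(s)}$.
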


\subsection{Main result}
Assuming the induction hypothesis in Assumption~\ref{assum:hypothesis} holds, we show that the hypothesis also holds at iteration $k$ through the following lemmas.
\begin{lemma}[\bf Edge adding]\label{lemma:addactive} At iteration $k$,
either the edge connecting $v_\text{min}^{(s)}$ to tree $T^{(s)}_k$, or the edge $v_\text{min}^{(t)}$ to tree $T^{(t)}_k$ will be added to the active set, where $v_\text{min}^{(s)}$ and $v_\text{min}^{(t)}$ are vertices with minimum distance to roots $v_s$ and $v_t$ among all other vertices outside the two trees, respectively.
\end{lemma}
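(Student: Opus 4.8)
The plan is to use the explicit formulas from Propositions~\ref{prop:joining} and~\ref{prop:crossing} to show that the next breakpoint $\lambda_{k+1}$ is a \emph{joining} time, and that the specific edge that joins is one of the two ``frontier'' edges connecting $v_\text{min}^{(s)}$ or $v_\text{min}^{(t)}$ to its respective tree. Throughout I invoke the induction hypothesis (Assumption~\ref{assum:hypothesis}), so that $T^{(s)}_k$ and $T^{(t)}_k$ are shortest-path trees rooted at $v_s,v_t$, and I may use the tree-based simplifications of the joining/crossing times derived in those propositions.

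\textbf{Step 1 (candidate joining times).} First I would examine the four cases of $t^{\jn}_{j,k}$ in Proposition~\ref{prop:joining}. The cases where $(v_1,v_2)$ lies within $\Omega_k^2$, ${T^{(s)}_k}^2$, or ${T^{(t)}_k}^2$ give $t^{\jn}_{j,k}=0$ and cannot determine $\lambda_{k+1}>0$. This leaves the \emph{frontier} edges connecting a tree to $\Omega_k$ (the $T^{(s)}_k\times\Omega_k$ and $T^{(t)}_k\times\Omega_k$ cases) and the \emph{bridge} edges in $T^{(s)}_k\times T^{(t)}_k$. For a frontier edge $(v_1,v_2)\in T^{(s)}_k\times\Omega_k$, the joining time is
\begin{equation*}
t^{\jn}_{j,k}=\Big(|T^{(s)}_k|\,l^{(s)}_{v_2}-\sum_{v\in T^{(s)}_k}l^{(s)}_v\Big)^{-1},
\end{equation*}
which is a strictly decreasing function of the distance $l^{(s)}_{v_2}$ of the outside endpoint $v_2$ (the summation term being fixed at this iteration). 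Since $\lambda^{\jn}_{k+1}=\max_j t^{\jn}_{j,k}$ selects the \emph{largest} joining time, it selects the frontier edge whose outside endpoint has the \emph{smallest} distance $l^{(s)}_{v_2}$ to the root $v_s$ --- and the minimizing $v_2$ over candidate edges from $T^{(s)}_k$ is exactly $v_\text{min}^{(s)}$ by definition. The identical argument on the $t$-side yields $v_\text{min}^{(t)}$.

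\textbf{Step 2 (joining beats crossing).} Next I would rule out a crossing at this iteration, so that the active set only grows. Using Proposition~\ref{prop:crossing}, each crossing ratio $a^{(k)}_j/b^{(k)}_j$ for an in-tree edge has the form $\big((|\Ts_k|/|R^{(s)}_j|)\sum_{v\in R^{(s)}_j}l^{(s)}_v-\sum_{v\in T^{(s)}_k}l^{(s)}_v\big)^{-1}$. The goal is to show the largest candidate crossing time $\lambda^{\cs}_{k+1}$ is strictly smaller than the best frontier joining time from Step~1; intuitively, because $R^{(s)}_j\subseteq T^{(s)}_k$ consists of vertices already in the tree, its average distance $(1/|R^{(s)}_j|)\sum_{v\in R^{(s)}_j}l^{(s)}_v$ is no larger than $l^{(s)}_{v_\text{min}^{(s)}}$ (any outside vertex is at least as far as every in-tree vertex, by the shortest-path-tree property), which forces the corresponding reciprocal to be dominated by the frontier joining time. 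Establishing this comparison cleanly, and confirming that each $t^{\cs}_{j,k}$ in fact falls outside $(0,\lambda_k)$ via the indicator in~\eqref{eq:cross}, is where I expect the real work to be; this is effectively the content that Lemma~\ref{lemma:cross} (referenced in the numerical example, where active edges are ``never removed'') is meant to supply, and I would either cite it or inline its key inequality here.

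\textbf{Step 3 (conclusion).} Finally, combining Steps~1 and~2, the next breakpoint $\lambda_{k+1}=\max\{\lambda^{\jn}_{k+1},\lambda^{\cs}_{k+1}\}=\lambda^{\jn}_{k+1}$ is attained by one of the two frontier edges identified in Step~1, namely the edge connecting $v_\text{min}^{(s)}$ to $T^{(s)}_k$ or the edge connecting $v_\text{min}^{(t)}$ to $T^{(t)}_k$ --- whichever yields the larger joining time. I would also briefly note that the bridge case $T^{(s)}_k\times T^{(t)}_k$ (joining time $(|T^{(s)}_k|+|T^{(t)}_k|)/\Delta$) corresponds to the two trees meeting, which is the termination event and can be deferred to the subsequent lemmas on when the trees connect. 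The main obstacle, as flagged, is the strict dominance of joining over crossing in Step~2; the rest follows from monotonicity of the explicit formulas and the definition of $v_\text{min}^{(s)},v_\text{min}^{(t)}$.
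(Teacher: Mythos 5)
Your proposal matches the paper's proof: the paper likewise reads off the cases of Proposition~\ref{prop:joining}, observes that the maximal joining time among frontier edges is achieved at the outside vertex of minimal root distance (hence $v_{\min}^{(s)}$ or $v_{\min}^{(t)}$), defers the bridge case $T^{(s)}_k\times T^{(t)}_k$ to Lemma~\ref{lemma:terminate}, and states the conclusion conditionally on no crossing, which is supplied separately by Lemma~\ref{lemma:cross} exactly as you propose to cite it. One caution on your Step~2 heuristic: Lemma~\ref{lemma:cross} actually establishes $a_j^{(k)}/b_j^{(k)}\geq\lambda_{k-1}\geq\lambda_k$, i.e., the crossing ratio is \emph{large} and therefore falls outside the window $(0,\lambda_k)$ of the indicator in~\eqref{eq:cross} (making the crossing time zero), rather than being ``dominated by'' the frontier joining time as your sketch suggests.
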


\begin{proof}
First, assume no edge connects the two trees, i.e., the last case of joining time does not happen (the case when it does is studied in Lemma~\ref{lemma:terminate}). The joining time $\displaystyle \lambda^{\jn}_{j,k}$ now reads
\begin{equation*}
\max \bigg\{\Big(|\Ts_k|l_{v^{(s)}_{\min}}  \!-\!\sum\limits _{v\in T^{(s)}_k} l^{(s)}_v \Big)^{-1}\!\!\!\!\!,\;\;\; {\blue \Big(|\Tt_k|l_{v^{(t)}_{\min}} \! - \! \sum\limits _{v\in T^{(t)}_k} l^{(t)}_v\Big)^{-1} \bigg\}},
\end{equation*}
where the first expression is achieved by the edge that connects $v_\text{min}^{(s)}$ to tree $T^{(s)}_k$ and the second is achieved by the edge that connects $v_\text{min}^{(t)}$ to tree $T^{(t)}_k$. Hence, one of these two edges is joined to the active set, if crossing does not occur.
\end{proof}

\begin{remark}[\bf No cycles] Cycles may be created in the following two scenarios: {$(i)$} An edge that connects two vertices of a tree is joined; $(ii)$ Two edges that connect the tree to a single vertex, say $v$, are joined simultaneously. Scenario $(i)$ can not happen because $t_j^{\jn}=0$ for such edges (first case of joining time). Scenario $(ii)$ can not happen, because in order for two edges to join simultaneously, we must have two distinct shortest paths from $v$ to the root, which is not possible according to Assumption \ref{assum:uni}. 
\end{remark}

\begin{lemma}[\bf No edge removed]\label{lemma:cross}
At iteration $k$ and if $\lambda>0$, crossing does not take place.
\end{lemma}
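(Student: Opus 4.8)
The plan is to prove the stronger statement that \emph{every} crossing time at iteration $k$ vanishes, i.e. $t^{\cs}_{j,k}=0$ for all $j\in\calA_k$. By the indicator in~\eqref{eq:cross} this is the same as saying that the extrapolated zero-crossing $a^{(k)}_j/b^{(k)}_j$ of $\bbeta_j(\lambda)$ never lands in the open descent interval $(0,\lambda_k)$; then $\lambda^{\cs}_{k+1}=\max_{j\in\calA_k}t^{\cs}_{j,k}=0$, so no edge can be removed while $\lambda>0$. By Assumption~\ref{assum:hypothesis} each active edge lies inside one of the two trees, so by symmetry it suffices to treat $e_j\in{\Ts_k}^2$ (the $t$-tree case being identical). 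Writing $N_s=|\Ts_k|$, $R_j=R^{(s)}_j$, $r_j=|R_j|$ and $S_s=\sum_{v\in\Ts_k}l^{(s)}_v$, Proposition~\ref{prop:crossing} gives $a^{(k)}_j/b^{(k)}_j=1/E_j$, where
\[
E_j=\frac{N_s}{r_j}\sum_{v\in R_j}l^{(s)}_v-S_s=N_s\,\bar l_{R_j}-S_s,\qquad \bar l_{R_j}:=\tfrac{1}{r_j}\textstyle\sum_{v\in R_j}l^{(s)}_v .
\]
First I would reduce the whole claim to the single inequality $E_j\le 1/\lambda_k$: if $E_j\le 0$ then $1/E_j\le 0\notin(0,\lambda_k)$, while if $E_j>0$ then $E_j\le 1/\lambda_k$ forces $1/E_j\ge\lambda_k$, again excluding $(0,\lambda_k)$.

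To establish $E_j\le 1/\lambda_k$ I would proceed in two steps. The easy step bounds the mean root-distance by the maximum: since $R_j\subseteq\Ts_k$ we have $\bar l_{R_j}\le l^{(s)}_{\max}:=\max_{v\in\Ts_k}l^{(s)}_v$, so $E_j\le N_s\,l^{(s)}_{\max}-S_s$. The second, more delicate step recognises this right-hand side as the reciprocal of an \emph{earlier} breakpoint. By the induction hypothesis together with Lemma~\ref{lemma:addactive}, vertices enter $\Ts_k$ in order of increasing distance to $v_s$ (ties being excluded by Assumption~\ref{assum:uni}), so the farthest vertex is the last $s$-vertex adjoined, say at iteration $k'\le k$. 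Specialising the joining-time formula of Proposition~\ref{prop:joining} to that step, with $|\Ts_{k'-1}|=N_s-1$ and $\sum_{v\in\Ts_{k'-1}}l^{(s)}_v=S_s-l^{(s)}_{\max}$, yields $1/\lambda_{k'}=N_s\,l^{(s)}_{\max}-S_s$. Since the breakpoints strictly decrease and $k'\le k$, we have $\lambda_{k'}\ge\lambda_k$, i.e. $1/\lambda_{k'}\le 1/\lambda_k$, and chaining the estimates gives
\[
E_j\le N_s\,l^{(s)}_{\max}-S_s=1/\lambda_{k'}\le 1/\lambda_k ,
\]
as required. The edge adjoined at the previous step is the boundary case in which $R_j=\{v_{\text{new}}\}$ is the single farthest vertex (so $k'=k$), giving $E_j=1/\lambda_k$ exactly, consistent with continuity since $\bbeta_j(\lambda_k)=0$.

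The hard part is this second step. The average-versus-maximum bound is elementary, but the substance is in identifying $N_s\,l^{(s)}_{\max}-S_s$ with a genuine past breakpoint $1/\lambda_{k'}$ and then exploiting monotonicity of the breakpoint sequence. This is precisely where the Dijkstra ordering furnished by Lemma~\ref{lemma:addactive} and the tree structure of the active set (Assumption~\ref{assum:hypothesis}) are indispensable: without the ordering one cannot certify that $l^{(s)}_{\max}$ is realised by the last-adjoined vertex, and the comparison $E_j\le 1/\lambda_k$ may fail. I would also keep an eye on degenerate configurations (for instance $b^{(k)}_j=0$, equivalently $E_j=0$), where $\bbeta_j$ is constant and trivially does not cross zero; Assumption~\ref{assum:uni} and genericity of the weights exclude these in the relevant cases.
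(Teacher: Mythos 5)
Your proposal is correct and follows essentially the same route as the paper: bound the mean root-distance over $R^{(s)}_j$ by the distance of the last-adjoined (farthest) vertex, recognize $N_s\,l^{(s)}_{\max}-S_s$ as the reciprocal of the breakpoint at which that vertex joined, and invoke monotonicity of the breakpoints to conclude $a^{(k)}_j/b^{(k)}_j\ge\lambda_k$. Your uniform treatment of both trees via a past iteration $k'\le k$ is exactly the device the paper sketches for the $t$-tree, and your explicit handling of the sign of the denominator is a minor (welcome) refinement.
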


\begin{proof}
To prove that crossing does not take place before the algorithm terminates, we show that $a^{(k)}_j/b^{(k)}_j\geq \lambda_{k-1}$ and hence $a^{(k)}_j/b^{(k)}_j\geq \lambda_{k}$ for all $e_j$ in the active set, so that crossing time is zero according to its definition~\eqref{eq:cross}.

First, we obtain an expression for $\lambda_k$ and then compare it to crossing times. The value $\lambda_k$ is determined by the maximum of joining time and crossing time at $(k-1$)th iteration according to~\eqref{eq:next-break}. Under Assumption~\ref{assum:hypothesis}, the breakpoint $\lambda_k$ is the maximum joining time, which takes two possible values, corresponding to the edge that connects to either tree $T^{(s)}_{k-1}$ or tree $T^{(t)}_{k-1}$ as proved in Lemma~\ref{lemma:addactive}. 

Without loss of generality, we now assume the joining happens to the tree $T^{(s)}_{k-1}$. Then, 
\begin{align}
\lambda_{k-1}= \bigg(|\Ts_{k-1}|l_{v^{(s)}_{\min}}  -\sum_{v\in T^{(s)}_{k-1}} l^{(s)}_v\bigg)^{-1}.
\end{align}
Next, we show $a^{(k)}_j/b^{(k)}_j\geq \lambda_{k-1}$ for all $e_j$ that belong to the tree $T^{(s)}_k$. {  From Proposition~\ref{prop:crossing},  we can write $(a_j^{(k)}/b_j^{(k)})$ as} 
\begin{align*}
\frac{a_j^{(k)}}{b_j^{(k)}}
&=\bigg(\frac{1+|T^{(s)}_{k-1}|}{|R^{(s)}_j|} \sum_{v \in R^{(s)}_j} l^{(s)}_v - l_{v^{(s)}_{\min}} - \sum_{v\in T^{(s)}_{k-1}} l^{(s)}_v\bigg)^{-1}\\
&\geq \bigg(|T^{(s)}_{k-1}| l_{v^{(s)}_{\min}} - \sum_{v\in T^{(s)}_{k-1}} l^{(s)}_v\bigg)^{-1},
\end{align*}
where we used 
\begin{equation*}
|T^{(s)}_k|=|T^{(s)}_{k-1}|+1,\quad \sum_{v\in T^{(s)}_{k}} l^{(s)}_v=  l_{v^{(s)}_{\min}} + \sum_{v\in T^{(s)}_{k-1}} l^{(s)}_v, \quad \mbox{ and } \;\; l_{v^{(s)}_{\min}}  \geq l_{v}
\end{equation*}
for all $v\in T^{(s)}_k$. The inequality above holds because $v^{(s)}_{\min}$ is the latest vertex that is added to the tree, and other vertices that have already been added have a shorter distance to the root.

The proof of $a^{(k)}_j/b^{(k)}_j\geq \lambda_{k-1}$ for all $e_j$ that belong to the other tree $T^{(t)}_k$ is conceptually similar. One needs to compare $a^{(k)}_j/b^{(k)}_j$ with the joining time of the last edge that has been added to the tree $T^{(t)}_k$ at a certain past iteration, say $k'<k$, and use the fact that $\lambda_k<\lambda_{k'}$. The details are omitted on account of space. 
\end{proof}

\blue Recall from Section~\ref{sec:lars-alg}, that the LARS path yields a strictly decreasing sequence of breakpoints $\lambda_0>\lambda_1>\cdots>\lambda_K=0$. Hence, once the two trees connect at iteration $k$, it suffices to verify that all remaining candidate joining and crossing times are zero, which forces $\lambda_{k+1}=0$. \black

\begin{lemma}[\bf Termination criteria]\label{lemma:terminate}
The LARS algorithm terminates when the two trees connect.
\end{lemma}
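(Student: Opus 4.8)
The plan is to treat this as the complementary case to Lemma~\ref{lemma:addactive}: here the maximum of the joining time in Proposition~\ref{prop:joining} is attained by its fourth branch, i.e.\ by a single edge $e^{\ast}\in T^{(s)}_k\times T^{(t)}_k$ joining the two trees. First I would record two structural facts. Since $T^{(s)}_k$ and $T^{(t)}_k$ are vertex-disjoint, adjoining $e^{\ast}$ creates no cycle, so the updated active set $\mathcal A_k$ is a single tree; and this tree contains a unique $s$--$t$ path $p^{\ast}$, with $e^{\ast}$ the bridge, hence $e^{\ast}\in p^{\ast}$. Simultaneous connections through two edges are excluded exactly as in the ``No cycles'' remark, using Assumption~\ref{assum:uni}. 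The goal is then to show that the breakpoint $\lambda_{k+1}$ computed from $\mathcal A_k$ equals $0$, and, following the remark preceding the lemma, it suffices to verify that every joining time \eqref{eq:join} and every crossing time \eqref{eq:cross} vanishes.

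The engine of the argument is that the fit is now exact. Because $\mathcal A_k$ contains $p^{\ast}$, the incidence identity \eqref{eq:Dxy} gives $D_{\mathcal A_k}x^{(p^{\ast})}=y$, hence $Q_{\mathcal A_k}(W_{\mathcal A_k}x^{(p^{\ast})})=y$ and $y\in\operatorname{range}(Q_{\mathcal A_k})$. As $\mathcal A_k$ is a tree, $Q_{\mathcal A_k}$ has full column rank, so from \eqref{eq:a-b} the quantity $a^{(k)}=Q_{\mathcal A_k}^{+}y$ is the unique exact solution $a^{(k)}=W_{\mathcal A_k}x^{(p^{\ast})}$, and the residual $Q_{\mathcal A_k}a^{(k)}-y$ is zero. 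Substituting this into \eqref{eq:join} annihilates every numerator $Q_j^{T}(Q_{\mathcal A_k}a^{(k)}-y)$, so $t^{\jn}_{j,k}=0$ for all $j\in\mathcal A_k^{c}$ and therefore $\lambda^{\jn}_{k+1}=0$.

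For the crossing times I would split the active edges by their role in $p^{\ast}$. From $a^{(k)}=W_{\mathcal A_k}x^{(p^{\ast})}$, the component $a^{(k)}_j$ equals $\pm w_j$ on the edges of $p^{\ast}$ and $0$ on the remaining ``off-path'' tree edges. For an off-path edge, $a^{(k)}_j/b^{(k)}_j=0\notin(0,\lambda_k)$, so \eqref{eq:cross} gives crossing time $0$ (here $b^{(k)}_j\neq0$, since the edge is active and $\beta_j(\lambda_k)=-\lambda_k b^{(k)}_j\neq0$ by Lemma~\ref{lemma:cross}). The newly adjoined edge $e^{\ast}$ enters continuously with value $0$ at $\lambda_k$, i.e.\ $a^{(k)}_{e^{\ast}}-\lambda_k b^{(k)}_{e^{\ast}}=0$, so its ratio is exactly $\lambda_k$ and the strict inequality in \eqref{eq:cross} again yields crossing time $0$. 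For the remaining interior path edges I would show $a^{(k)}_j/b^{(k)}_j\ge\lambda_k$, mirroring Lemma~\ref{lemma:cross}: since $\beta_j(\lambda)=a^{(k)}_j-\lambda b^{(k)}_j$ is affine with $\beta_j(0)=a^{(k)}_j\neq0$ and $\beta_j(\lambda_k)\neq0$ of the same sign $s_j$, it cannot vanish on $[0,\lambda_k]$. Collecting the three cases gives $\lambda^{\cs}_{k+1}=0$, whence $\lambda_{k+1}=\max\{\lambda^{\jn}_{k+1},\lambda^{\cs}_{k+1}\}=0$ by \eqref{eq:next-break} and the algorithm exits.

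The main obstacle is the last crossing case, the interior path edges, because once the two trees merge the two-tree expressions of Proposition~\ref{prop:crossing} no longer apply verbatim, so the sign agreement $\operatorname{sign}(a^{(k)}_j)=s_j$ cannot simply be read off. I would secure it in one of two ways: either re-derive the analogue of Proposition~\ref{prop:crossing} on the merged tree via the path-matrix/pseudoinverse identity of Lemma~\ref{lem:incidence} and verify $a^{(k)}_j/b^{(k)}_j\ge\lambda_k$ directly --- noting that for $T^{(t)}_k$-edges the $s$--$t$ path traverses them opposite to the growth orientation of the tree rooted at $t$, a bookkeeping point that must be handled with care --- or, more economically, invoke uniqueness and continuity of the lasso path (Lemma~\ref{lemma:uni}) together with $\lim_{\lambda\to0}\boldsymbol{\beta}(\lambda)=\boldsymbol{\beta}_0=W\boldsymbol{x}_0$ supported exactly on $p^{\ast}$, so that a sign change of an interior path edge on $(0,\lambda_k)$ is incompatible with the unique solution having constant support $p^{\ast}$ down to $\lambda=0$.
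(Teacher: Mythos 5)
Your overall architecture is the same as the paper's: after the bridging edge joins, the active set is a single tree containing the unique $s$--$t$ path $p^{\ast}$, the residual $y-Q_{\mathcal A}a$ vanishes because $y$ now lies in the range of $Q_{\mathcal A}$ (so every joining time \eqref{eq:join} is zero), and $a=W_{\mathcal A}x^{(p^{\ast})}$ forces $a_j=0$, hence zero crossing time, on the off-path active edges. This matches the paper's proof, and your extra observation that the freshly joined edge has ratio exactly $\lambda_k$, so that the strict inequality in \eqref{eq:cross} excludes it, is a correct refinement the paper subsumes in its general bound.

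The gap is that you never actually prove the one claim that carries the lemma, namely $a_j/b_j\ge\lambda_k$ for the interior path edges, and neither of your proposed escapes closes it. Route (a) --- rederive the analogue of Proposition~\ref{prop:crossing} on the merged tree and ``verify directly'' --- is precisely what the paper does, and it is not a routine verification: the paper computes $(|T^{(s)}_k|+|T^{(t)}_k|)\,b_j/a_j$ explicitly as $\Delta$ plus correction terms indexed by $R_j^{(s)}$, bounds those corrections using the monotonicity of $l_v^{(s)}$ and $l_v^{(t)}$ over $R_j^{(s)}$, and reduces everything to the inequality \eqref{eq:claim}, which is then established by comparing the joining times of the edges $e_p$, $e_i$, $e_s$ in the two possible orders in which they could have entered the active set. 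None of that content appears in your proposal; it is exactly the ``bookkeeping point that must be handled with care'' that you defer. Route (b) is not merely incomplete but circular: knowing that $\boldsymbol\beta(\lambda)\to\boldsymbol\beta_0$ with support $p^{\ast}$ does not preclude an interior path edge from crossing zero at some $\lambda'\in(0,\lambda_k)$ and rejoining at a later breakpoint; it only precludes its absence in the limit. Ruling out such a cross-and-rejoin is equivalent to showing there are no further breakpoints, which is the statement of the lemma, and the paper's ``constant support for $\lambda\in(0,\bar\lambda)$'' remark gives no control on $(\bar\lambda,\lambda_k)$. To complete the proof you must supply the quantitative comparison of $a_j/b_j$ against $\lambda_k=(|T^{(s)}_k|+|T^{(t)}_k|)/\Delta$ via \eqref{eq:claim}.
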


\begin{proof}
Assume the two trees $\Ts_k$ and $\Tt_k$ become connected at iteration $k$. This happens when the last expression of joining achieves the maximum joining time, hence {\blue $\lambda_k = \big(|\Ts_k| + |\Tt_k|\big) / \Delta$}, as depicted in Fig.~\ref{fig:part-c}. The objective is to show that the algorithm terminates after this, i.e. $\lambda_{k+1}=0$. We show this by proving that the joining time and crossing time are both zero.

The derivation of element-wise joining time in  Proposition~\ref{prop:joining} reveals that $t_{j,k+1}^{join} = 0,~\forall e_j \in \calA^{c}_{k+1}$. 
For the crossing time, the derivation of   Proposition~\ref{prop:crossing} yields that for all $e_j \in \calA_{k+1}$,
\begin{align*}
\frac{a_j^{(k+1)}}{b_j^{(k+1)}}=
\begin{cases}
0, &\text{if}~e_j \notin p_{s,t}\\[0.5em]
\Big(\displaystyle\sum_{v\in R_j} l_v^{(s)} - \frac{|R_j|}{|\Tt_k|+|\Ts_k|} \displaystyle\sum_{v\in\Ts_k \cup \Tt_k} l_v^{(s)}\Big)^{-1}, \quad &\mbox{otherwise}
\end{cases}
\end{align*}
where $p_{s,t} \subset \calA_{k+1}$ is the path from $v_s$ to $v_t$. Therefore, it remains to show that the crossing time for the edges in $\calA_{k+1}\cap p_{s,t}$ are zero. We show this by proving $a_j^{(k+1)}/b_j^{(k+1)}\geq \lambda _k$ for all edges $e_j\in \calA_{k+1}\cap p_{s,t}$. Considering the edges that belong to the tree $\calA^{(s)}_{k}$, we have 
\begingroup
\allowdisplaybreaks
\begin{align*}
\frac{|\Ts_k|+|\Tt_k|}{\Big(a_j^{(k+1)}/b_j^{(k+1)}\Big)} = \blue \Delta \black + |\Ts_k| \sum_{v\in R_j^{(s)}}l_v^{(s)} - |R_j^{(s)}|\sum_{v \in \Ts_k} l_v^{(s)}
- |\Tt_k| \sum_{v\in R_j^{(s)}} l_v^{(t)} + |R_j^{(s)}|\sum_{v \in \Tt_k} l_v^{(t)},
\end{align*}
\endgroup
where $R_j^{(s)}$ are the vertices in the tree $\Ts_k$ such that their path to the root $v_s$ contains $e_j$.  Because $l_v^{(s)} \leq l_{v_2}^{(s)}$ and $l_v^{(t)} \geq l_{v_2}^{(t)}$  for all $v\in R_j^{(s)}$, we have the inequality
\begingroup
\allowdisplaybreaks
\begin{align*}
\frac{|\Ts_k|+|\Tt_k|}{\Big(a_j^{(k+1)}/b_j^{(k+1)}\Big)} - \blue \Delta \black \leq~|R_j|\bigg(|\Ts_k| l_{v_2}^{(s)}-\sum_{v \in \Ts_k} l_v^{(s)}
 - |\Tt_k|  l_{v_2}^{(t)} + \sum_{v \in \Tt_k} l_v^{(t)} \bigg).
\end{align*}
\endgroup
We claim that the expression in parentheses is negative
\begin{equation}\label{eq:claim}
|\Ts_k| l_{v_2}^{(s)}-\sum_{v \in \Ts_k} l_v^{(s)}  - |\Tt_k|  l_{v_2}^{(t)} + \sum_{v \in \Tt_k} l_v^{(t)} \leq 0, 
\end{equation} 
and if the claim is true, we have
$$
a_j^{(k+1)}/b_j^{(k+1)} \geq \bigg(|\Ts_k| + |\Tt_k|\bigg)/\blue \Delta \black = \lambda_k,
$$
so that the crossing time is zero for edges $e_j \in \calA^{(s)}_k$.
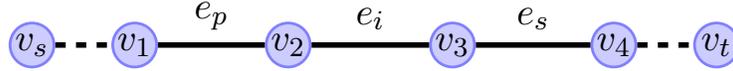
\begin{figure}[htb!]
\centering
\scalebox{1.3}{
\begin{tikzpicture}
[process/.style={circle,draw=blue!50,fill=blue!20,thick,inner sep=0pt,minimum size=4.5mm}]
\node (n1) at (0,0)   [process] {$v_{s}$};
\node (n2) at (1.05,0) [process] {$v_{1}$};
\node (n3) at (2.62,0) [process] {$v_{2}$};
\node (n4) at (4.3,0) [process] {$v_{3}$};
\node (n5) at (5.95,0) [process] {$v_{4}$};
\node (n6) at (7,0)  [process] {$v_{t}$};
  		
\draw[dashed][ultra thick] (n1) to   (n2);
\draw[ultra thick] (n2) to  node[midway,above](){$e_{p}$} (n3);
\draw[ultra thick] (n3) to  node[midway,above](){$e_{i}$} (n4);
\draw[ultra thick] (n4) to  node[midway,above](){$e_{s}$} (n5);
\draw[dashed][ultra thick] (n5) to  (n6);
\end{tikzpicture}}
\caption{Path $\mathcal{P}_{s,t}$}
\label{fig:part-c}
\end{figure}

{\noindent Finally, we show the claim~\eqref{eq:claim} is true by considering the two possible orders of edge-adding:}

{\noindent \bf {\em i})} The edges are added in the order $e_s$\textrightarrow$e_p$\textrightarrow$e_i$, the joining time for $e_p$ and $e_i$ are: 
\begin{align*}
\begin{cases}
t_{p,k-1}^{\jn} \!&=\! \Big(|\Ts_k|l^{(s)}_{v_{2}} 
-\!\!\!\!\!\sum\limits_{v\in T^{(s)}_k}\!\!\! l^{(s)}_v\Big)_{,}^{-1}\\[2em]
t_{i,k-1}^{\jn}\!&=\!\Big(|\Tt_k|l^{(t)}_{v_{2}}-\!\!\!\!\!\sum\limits_{v\in T^{(t)}_k}\!\!\!l^{(t)}_v\Big)_{.}^{-1}    
\end{cases}
\end{align*}
The assumption that $e_p$ is added before $e_i$ implies $t_{p,k-1}^{\jn} >t_{i,k-1}^{\jn} $ concluding the claim~\eqref{eq:claim}.

{\noindent \bf {\em ii})} The edges are added in the order $e_p$\textrightarrow$e_s$\textrightarrow$e_i$, the joining time for $e_p$ and $e_s$ are:   
\begin{align*}
\begin{cases}
t_{p,k-2}^{\jn}\!&=\!\Big(|\Ts_k|l^{(s)}_{v_{2}} 
-\!\!\!\!\!\sum\limits_{v\in T^{(s)}_k}\!\!\!l^{(s)}_v\Big)^{-1}_{,}\\[2em]
t_{s,k-2}^{\jn}\!&=\!\Big(|\Tt_k|l^{(t)}_{v_{2}} 
-\!\!\!\!\!\sum\limits_{v\in T^{(t)}_k}\!\!\!l^{(t)}_v\Big)_{.}^{-1}        
\end{cases}
\end{align*}
The order $e_p$ is added before $e_s$ concludes the claim~\eqref{eq:claim} because $t_{p,k-1}^{\jn} >t_{s,k-1}^{\jn}$.

\noindent The proof that the crossing times for the edges that belong to the tree $\calA_k^{(t)}$ is by symmetry and interchanging $s$ and $t$. 
\end{proof}

We now establish a parallel
between the LARS algorithm and the bi-directional Dijkstra algorithm, in that they share the defining feature, terminating when two trees built from opposite directions meet.

\begin{theorem}[\bf Equivalence]\label{thm:thm1}
The LARS algorithm iteratively builds two shortest-path trees, starting from roots $v_s$ and $v_t$, and terminates when these two trees connect.
\end{theorem}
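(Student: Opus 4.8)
The plan is to prove the theorem by induction on the LARS iteration index $k$, taking Assumption~\ref{assum:hypothesis} as the induction hypothesis: that at each iteration the active set decomposes as $\calA_k=\calA_k^{(s)}\cup\calA_k^{(t)}$ into two disjoint shortest-path trees $T^{(s)}_k,T^{(t)}_k$ rooted at $v_s$ and $v_t$, with no crossing having occurred. The three lemmas already established are precisely the ingredients needed to propagate this hypothesis and to certify termination, so the theorem follows by assembling them. First I would dispatch the base case $k=1$, already noted in the text: the active set is empty, $T^{(s)}_0=\{s\}$ and $T^{(t)}_0=\{t\}$ are the trivial singleton trees, they are disjoint since $s\neq t$, and crossing is impossible with an empty active set.

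For the inductive step, assuming the hypothesis at iteration $k-1$, I would combine the lemmas as follows. By Lemma~\ref{lemma:cross}, crossing does not occur at iteration $k$, so no active edge is removed and the active set only grows. By Lemma~\ref{lemma:addactive}, the edge that joins is the one linking the nearest exterior vertex $v^{(s)}_{\min}$ (or $v^{(t)}_{\min}$) to its tree; together with the \emph{No cycles} remark—whose two cycle-creating scenarios are ruled out by the vanishing joining time of intra-tree edges and by Assumption~\ref{assum:uni}—this edge attaches a genuinely new vertex without closing a cycle. Hence $T^{(s)}_k$ (resp.\ $T^{(t)}_k$) remains a tree, the two stay disjoint, and the active set retains the required partition.

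It remains to verify that the enlarged tree is again a \emph{shortest-path} tree, which is the one substantive point. Here I would exploit the reciprocal form of the joining time in Proposition~\ref{prop:joining}: since the joining time to $T^{(s)}_k$ through an exterior vertex $v_2$ is decreasing in the tentative distance $l^{(s)}_{v_2}$, selecting the maximum joining time is equivalent to selecting the vertex of minimum tentative distance—precisely Dijkstra's greedy expansion rule. A standard exchange argument then shows that this minimum tentative distance coincides with the true shortest-path distance, so the newly attached vertex is reached optimally and the tree property is preserved, re-establishing Assumption~\ref{assum:hypothesis} at iteration $k$. Termination is finally handed to us by Lemma~\ref{lemma:terminate}: once a joining edge bridges the two trees (the last case of the joining-time formula attaining the maximum), all remaining joining and crossing times vanish, forcing $\lambda_{k+1}=0$ and halting the algorithm exactly when the two trees connect—the defining feature of bi-directional Dijkstra.

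I expect the main obstacle to be the shortest-path-tree preservation just described: translating LARS's ``largest breakpoint'' selection into Dijkstra's ``smallest distance'' selection is transparent from the reciprocal formula, but one must still invoke (or reproduce) the correctness of the greedy expansion to conclude that the tentative distance equals the true distance. The single-edge-at-a-time progression on which this argument relies is itself guaranteed by Assumption~\ref{assum:uni}, which rules out simultaneous arrivals and the attendant ambiguity in the active set.
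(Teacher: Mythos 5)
Your proposal follows essentially the same route as the paper: the theorem is proved by induction with Assumption~\ref{assum:hypothesis} as the hypothesis, dispatching the base case $T^{(s)}_0=\{s\}$, $T^{(t)}_0=\{t\}$, and then assembling Lemma~\ref{lemma:addactive} (greedy edge addition to the nearer exterior vertex), Lemma~\ref{lemma:cross} (no crossing for $\lambda>0$), and Lemma~\ref{lemma:terminate} (termination when the trees connect). Your added discussion of why the enlarged tree remains a shortest-path tree --- translating the maximal joining time into the minimal tentative distance and invoking the correctness of the greedy expansion --- is a point the paper leaves implicit in the statement of Lemma~\ref{lemma:addactive}, so it is a welcome elaboration rather than a departure.
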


\begin{proof}
Assuming the induction hypothesis in Assumption~\ref{assum:hypothesis} holds true, we have shown that in the $k$th iteration: i) edges connecting to vertices with the shortest distance to $v_s$ and $v_t$ are added to the active set (Lemma~\ref{lemma:addactive}); ii) crossing, where edges are removed from the active set, does not occur when $\lambda > 0$ (Lemma~\ref{lemma:cross}). Finally, iii) the algorithm terminates when the two shortest-path trees connect and $\lambda = 0$ (Lemma~\ref{lemma:terminate}).
\end{proof}

\section{Proximal algorithm for large-scale graph}\label{sec:proximal}
The LARS algorithm may provide a computational advantage if the desired path only contains a few edges. For large graphs, the number of breakpoints (proportional to the number of edges in the active set), tends to be very large, rendering $\beta(\lambda)$ intractable. The ADMM algorithm, on the other hand, is particularly well-suited for solving large-scale convex optimization problems in a distributed manner due to its scalability~\cite{boyd2004convex,boyd2011distributed,yue2018implementing}.

\subsection{ADMM and InADMM algorithm}
Application of the ADMM to the lasso, as presented in~\cite[Section 6.4]{boyd2011distributed}, is based on the reformulation of the lasso  problem~\eqref{eq:lasso} as follows: 
\begin{equation}\label{eq:lasso-new}
\min_{\beta, \alpha \in \mathbb R^m}\frac{1}{2}\|y-Q\beta\|_2^2 +\lambda\|\alpha\|_1 +\frac{\rho}{2}\|\beta-\alpha\|^{2}_{2},~\text{s.t}~\alpha\!=\!\beta,
\end{equation}
where $\alpha \in \mathbb{R}^m$ is an additional optimization variable, and $\rho$ is a positive constant. 

The {\em Lagrangian} $L_{\rho}(\beta,\alpha,u)$ corresponding to the constrained optimization problem~\eqref{eq:lasso-new} is 
\begin{align*}
L_{\rho}= \frac{1}{2}\|y-Q\beta\|_{2}^{2}+\lambda\|\alpha\|_{1} + u^{T}(\beta-\alpha)+\frac{\rho}{2}\|\beta-\alpha\|^{2}_{2},
\end{align*}
where $u\in \mathbb R^m$ is the {\em Lagrange multiplier}. Let $v \triangleq u/\rho$, the ADMM algorithm consists of the $\alpha,\beta$-minimization steps and the step updating the dual variable $v$. Specifically, the optimal variables $\beta,\alpha,v$ is computed as
\begin{align}
\beta^{k} :&={\rm arg}\min_{\beta} L_{\rho}(\beta,\alpha^{k-1},v^{k-1})\nonumber\\
&= (Q^{T}Q+\rho I)^{-1}\Big(Q^{T}y+\rho(\alpha^{k-1}-v^{k-1})\Big) \label{eq:beta-update}\\    
\alpha^{k} :&= {\rm arg}\min_{\alpha} L_{\rho}(\beta^{k},\alpha,v^{k-1})= S_{\lambda/\rho}\Big(\beta^{k}+\frac{1}{\rho}v^{k-1}\Big)\nonumber\\
v^{k} :&= v^{k-1}+\rho(\beta^{k} - \alpha^{k}), \nonumber
\end{align}
in the $(k-1)$th iteration.The $\beta$-update can be found in~\cite[Section 4.2]{boyd2011distributed} and $S_{\lambda/\rho}$ is the element-wise interpreted proximity operator of the $\ell_1$ norm, known as the {\em soft-thresholding} operator in~\cite[Section 4.4.3]{boyd2011distributed}.

To reduce the size and complexity of ADMM, we replace the matrix inversion $(Q^{T}Q+\rho I)^{-1}$
by
\begin{equation}\label{eq:matrix-identity}
(Q^TQ + \rho I)^{-1} = \frac{1}{\rho}\Big(I - Q^T\big(QQ^T + \rho I\big)^{-1}Q\Big),
\end{equation} 
using the {\em matrix identity}, which instead involves $(Q^{T}Q+\rho I)^{-1}$. We will show this step significantly reduces the complexity of the algorithm in the next section. To further reduce the complexity of ADMM for large-scale graphs, we use the InADMM algorithm introduced in~\cite{yue2018implementing} whose key idea is to approximately solve a system of linear equations instead of evaluating the matrix inversion exactly, using the matrix identity~\eqref{eq:matrix-identity} to replace the $\beta$ update in~\eqref{eq:beta-update} with
\begingroup
\allowdisplaybreaks
\begin{align*}
& h^{k-1} := Q^{T}y+\rho(\alpha^{k-1} - v^{k-1})\\
&\eta^{k} := (QQ^{T}+\rho I)^{-1}Q h^{k-1}, \\
&{\beta}^{k} := \frac{1}{\rho} (h^{k-1} - Q^{T}\eta^{k})
\end{align*}
\endgroup
and computes $\eta^{k}$ approximately using the {\em conjugate gradient} (CG) method~\cite{hestenes1952methods}.

\blue 
\begin{remark}[\bf Stopping criteria]
Both ADMM and InADMM use the standard primal--dual residual test. With $r_k=\beta_k-\alpha_k$ and $s_k=\alpha_k-\alpha_{k-1}$, a stopping criterion is when $\|r_k\|_2 \le \varepsilon_{\mathrm{abs}}\sqrt{m} + \varepsilon_{\mathrm{rel}}\max\{\|\beta_k\|_2,\|\alpha_k\|_2\}$, or when $\|s_k\|_2 \le \varepsilon_{\mathrm{abs}}\sqrt{m} + \varepsilon_{\mathrm{rel}}\|v_k\|_2$. We use $(\varepsilon_{\mathrm{abs}},\varepsilon_{\mathrm{rel}})=(10^{-8},10^{-6})$, with $m$ the number of edges. In InADMM, the inner CG solver uses relative residual $10^{-8}$ (up to $2\times 10^3$ iterations).
\end{remark}
% \begin{remark}[\bf Stopping criteria]
% Both ADMM and InADMM use the standard primal--dual residual test. Writing
% $r_k=\beta_k-\alpha_k$ and $s_k=\alpha_k-\alpha_{k-1}$, we terminate when
% \[
% \|r_k\|_2 \le \varepsilon_{\mathrm{abs}} \cdot\sqrt{m} + \varepsilon_{\mathrm{rel}}\cdot \max\Big\{\|\beta_k\|_2,\|\alpha_k\|_2\Big\},
% \qquad
% \|s_k\|_2 \le \varepsilon_{\mathrm{abs}} \cdot \sqrt{m} + \varepsilon_{\mathrm{rel}}\cdot\|v_k\|_2,
% \]
% with $(\varepsilon_{\mathrm{abs}},\varepsilon_{\mathrm{rel}})=(10^{-8},10^{-6})$, and $m$ number of edges (equivalently, length of $\beta$). For InADMM with conjugate gradients, the inner linear system is solved to relative residual tolerance $10^{-8}$.
% \end{remark}
\black

\subsection{Warm-start, Parallel lasso, and complexity}
The lasso formulation and the ADMM solvers admit remark features that can not be found in A$^\star$ search algorithm. We postulate two key features that are brought into the shortest path problem.

Firstly, distinguished from Dijkstra's algorithm, the ADMM allows the input of any initializer (potential-path), that may be used in tracking the graph topological changes. The problem then becomes correcting/updating the shortest path with an initial guess/prior as inputs. Our approach shows a fast convergence under such circumstances, see Fig. \ref{fig:Convergence}. 

Notably, the problem can be generalized through {\em parallel lasso}, addressing the multi-pair/all-pair shortest path problem defined as:
\begin{equation}
{\rm arg}\min_{\beta \in \mathbb R^m} \Big\{\sum_{i = 1,\dots,N}\frac{1}{2}\| \mathbf y_i-Q \beta_{(i))}\|_2^2 + \lambda\|\beta_{(i)}\|_1\Big\}\label{eq:parallel}
\end{equation}
where the $i$-th column of $\mathbf y$ is the indicator vector of path, and thus we defined $\mathbf y_{1,\dots,N}=[y_{(1)},\dots,y_{(N)}]$ and $\beta = [\beta_{(1)},\dots,\beta_{(N)}]$. The framework \eqref{eq:beta-update} is thus solves a multi-pair shortest path problem. The optimal variables in ADMM can be easily extended vector-wise while the dominated matrix inverse only needs to be computed once for all. Specifically, when $\mathbf y$ characterizes all pairs of vertices, Problem \eqref{eq:parallel} becomes the lasso formulation of all-pairs shortest path problem.

We use {\em worst-case complexity} (denoted as $\calO(\cdot)$) to evaluate the algorithms' performance. It quantifies the operations of the algorithm in the worst case, see \cite[Appendix C]{boyd2004convex}, \cite{cormen2022introduction}. The complexity of ADMM is dominated by the matrix inversion $(Q^{T}Q+\rho I)_{m\times m}^{-1}$, which is of order $\calO(m^3)$ (e.g. using {\em Cholesky decomposition}). However, by utilizing the matrix identity~\eqref{eq:matrix-identity}, this complexity is reduced to $\mathcal O(n^3)$. The most costly step in the CG method is the matrix-vector multiplication $(Q Q^{T} + \rho I)x$ where $x \in \mathbb R^n$. This has a complexity of $\mathcal O(m)$ because the weighted incidence matrix $Q$ has $2m$ nonzero elements. Assuming the CG algorithm terminates in $T_{CG}$ iterations, the complexity of the CG step in InADMM algorithm is of order $\calO(mT_{CG})$. Noticing the complexity of other operations in InADMM is at most $\calO(m)$, we summarize the complexities in Table~\ref{tab:complexity} as below.
\begin{table}[htb!]
\centering 
\begin{adjustbox}{width=.6\columnwidth}
\begin{tabular}{ccccc} 
\hline \hline 
{Variables}  & $\eta$ & $\beta$(Cholesky) & $\alpha$ & $v$ \\ [0.5ex]
\hline  
ADMM\footnote[6]     & $\calO(nm)$ &$\calO(n^{3})$ & $\calO(m)$ & $\calO(m)$\\
InADMM\footnote[7]   & $\calO(mT_{CG})$ & $\calO(m)$ & $\calO(m)$ & $\calO(m)$\\[1ex] 
\hline
\end{tabular}
\end{adjustbox}
\caption{  Complexities of ADMM and InADMM per iteration.}
\label{tab:complexity}
\end{table}

\footnotetext[6]{Per iteration, both methods converge within 40 steps for a graph with $17291$ edges, see Fig. \ref{fig:Convergence}.}
\footnotetext[7]{$T_{CG} \ll n$ and scalable with respect to the size of the graph. Empirically, we have $T_{CG}= 350$ for graph with $4422$ vertices and $T_{CG}= 320$ for graph with $5694$ vertices. To reach $\epsilon$ accuracy, using $O(1/\epsilon)$ iteration complexity \cite[Theorem 4.1]{yue2018implementing}, we conclude overall $O(nm /\epsilon)$ and $O(n^3/\epsilon)$ complexity for update of $\eta$ in InADMM and $\beta$ in ADMM, respectively.}

\subsection{\blue Example 1: Intelligent scissors on image}\label{subsec:num-exp}

We \blue first consider \black ``\textit{intelligent scissors}'', an image segmentation technique that is commonly used in computer vision~\cite{mortensen1995intelligent}. It creates a {\em grid graph} over the image with each pixel represented by a vertex, and is connected to its $8$ neighboring pixels through edges. The edge weights are assigned by a cost function~\cite[Section 3]{mortensen1995intelligent} based on image features. The objective is to identify a boundary between the portraits and the background, which turns out to be equivalent to finding the shortest path between selected pixels (marked in green) over the generated graph. The segmentation is obtained by identifying the edges in the shortest path. Herein, we use an image with $4422$ pixels. The edges, detected by constructing shortest paths using three different methods and colored in red, are drawn in Fig.~\ref{fig:IntelSci} (\blue The code used in the paper is available at~{ \url{github.com/dytroshut/Lasso-shortest-path.git};  functions in the code can be found in \cite{boydlassocode2011})}.\black 
For the InADMM, we utilized the CG method from~\cite{hestenes1952methods,yue2018implementing}.
The convergence rates of ADMM, InADMM, and Basis Pursuit\footnote[8]{  Basis Pursuit is a technique to obtain sparse solution to an underdetermined system of linear equations.}~\cite[Section 6.1]{boyd2011distributed}, are shown in Fig.~\ref{fig:Convergence}.  Further, to highlight the efficiency of lasso's distributed implementation, we used InADMM (with a pre-specified path as input).
\begin{figure}[htb!]
\centering
\begin{subfigure}{0.3\columnwidth}	
\centering
\includegraphics[width=\linewidth]{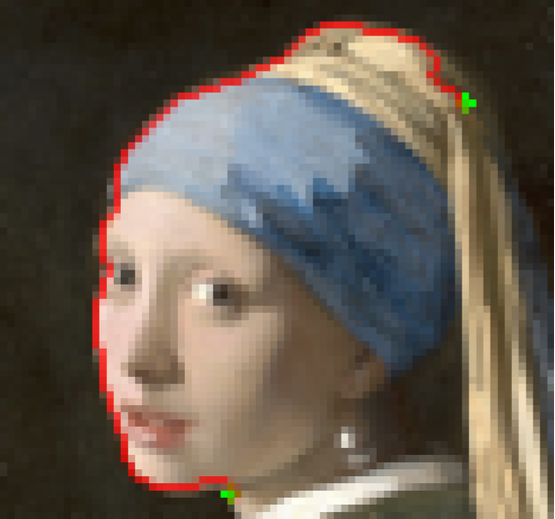}\vspace{-.7pt}
\includegraphics[width=\linewidth]{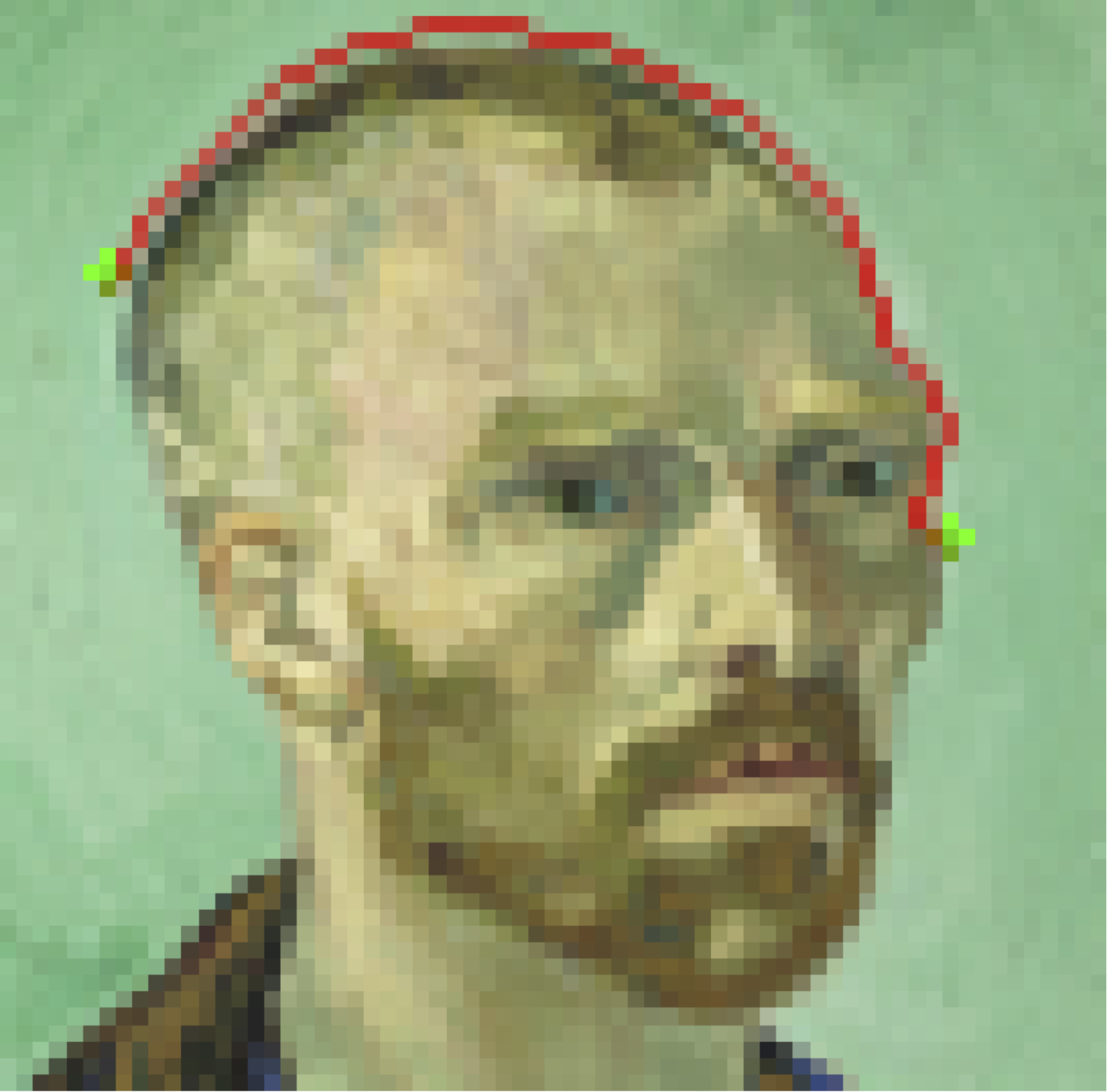}
\caption{Dijkstra}
\label{fig:Dijkstrapath}
\end{subfigure}\hspace{-2.3pt}
\begin{subfigure}{0.3\columnwidth}
\centering
\includegraphics[width=\linewidth]{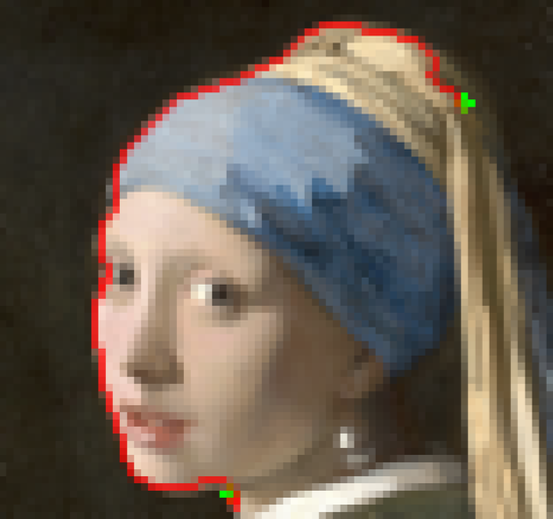}\vspace{-.7pt}
\includegraphics[width=\linewidth]{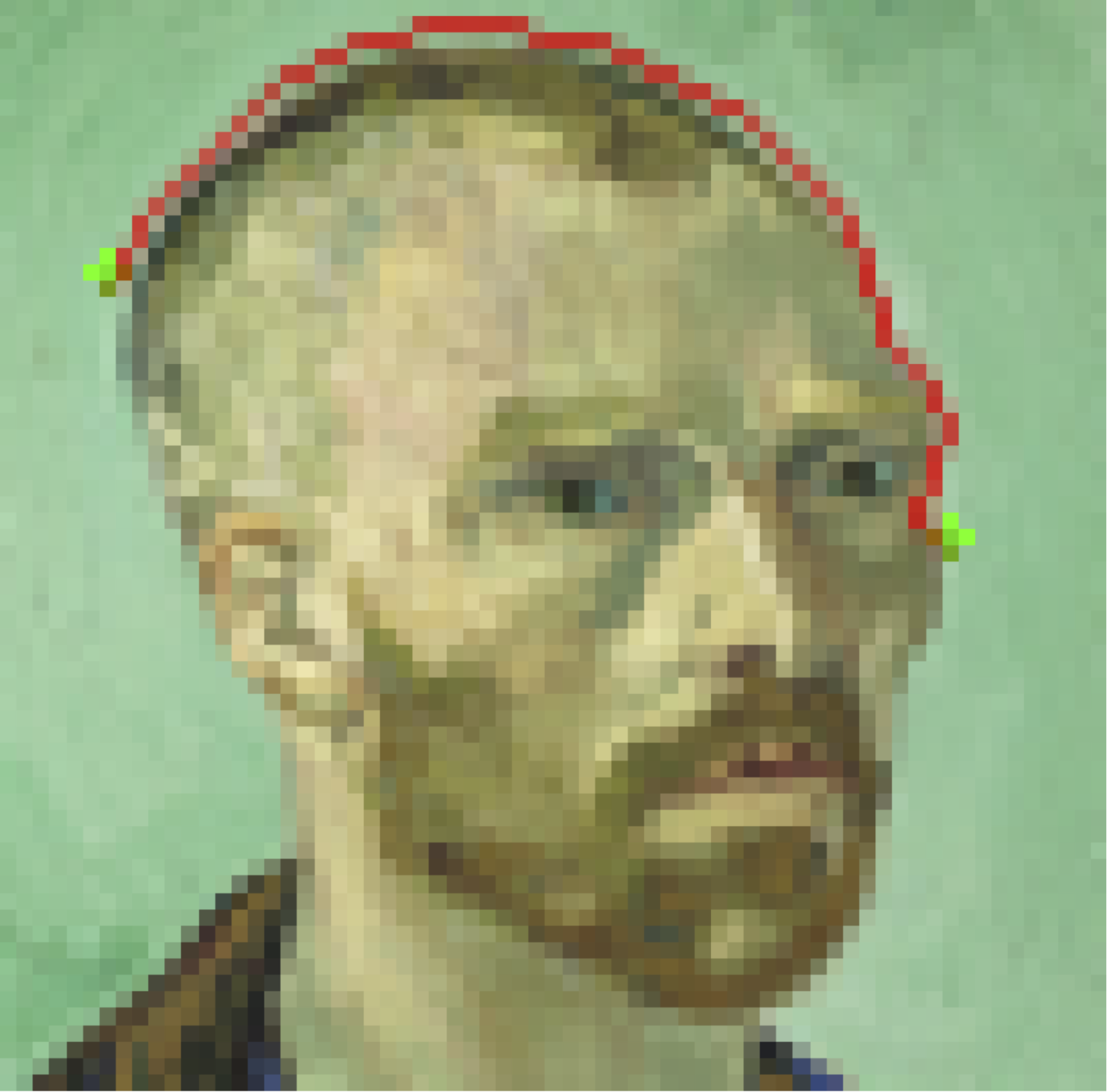}
\caption{ADMM}
\label{fig:ADMMpath}
\end{subfigure}\hspace{-2.3pt}
\begin{subfigure}{0.3\columnwidth}
\centering
\includegraphics[width=\linewidth]{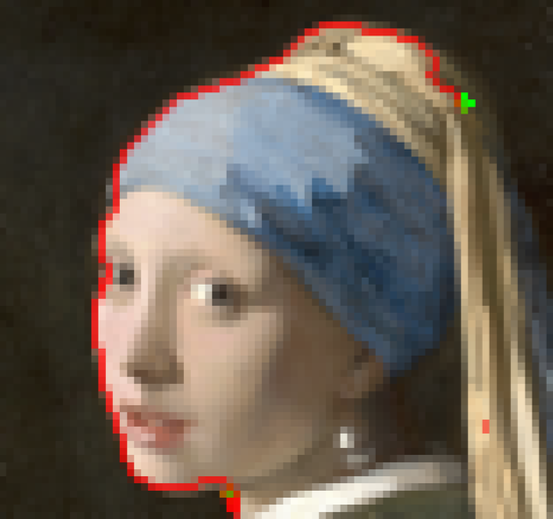}\vspace{-.7pt}
\includegraphics[width=\linewidth]{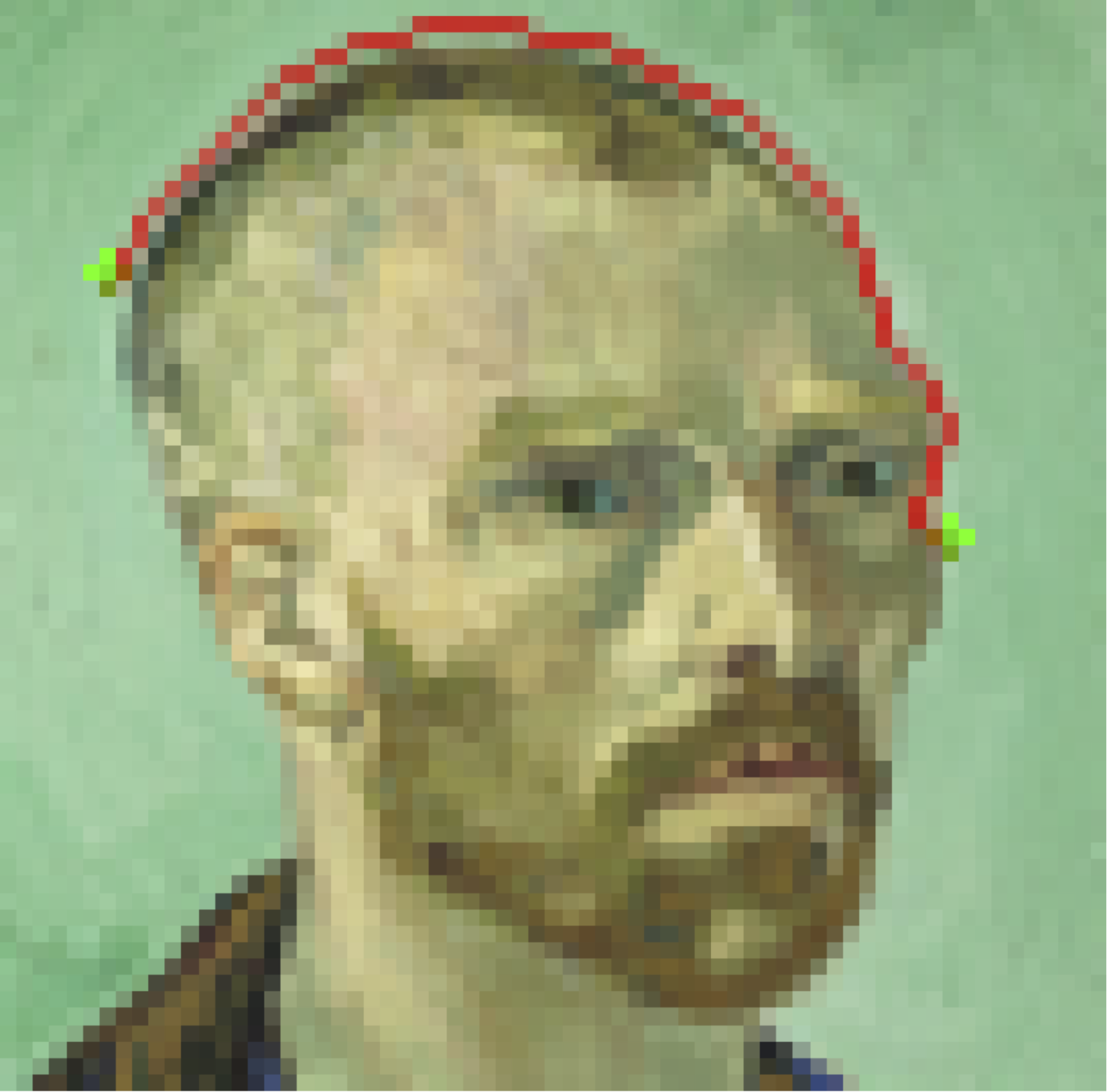}
\caption{InADMM}
\label{fig:InADMMpath}
\end{subfigure}
\caption{``Edge detection'' in an image as a short path, highlighted in red, and obtained using Dijkstra’s algorithm (Fig.~\ref{fig:Dijkstrapath}), as well as using the lasso solution $\beta$ in ADMM and InADMM in~Fig.~\ref{fig:ADMMpath} and Fig.~\ref{fig:InADMMpath}, respectively.}
\label{fig:IntelSci}
\end{figure}

\begin{figure}[htb!]
\centering
\includegraphics[width=0.8\columnwidth]{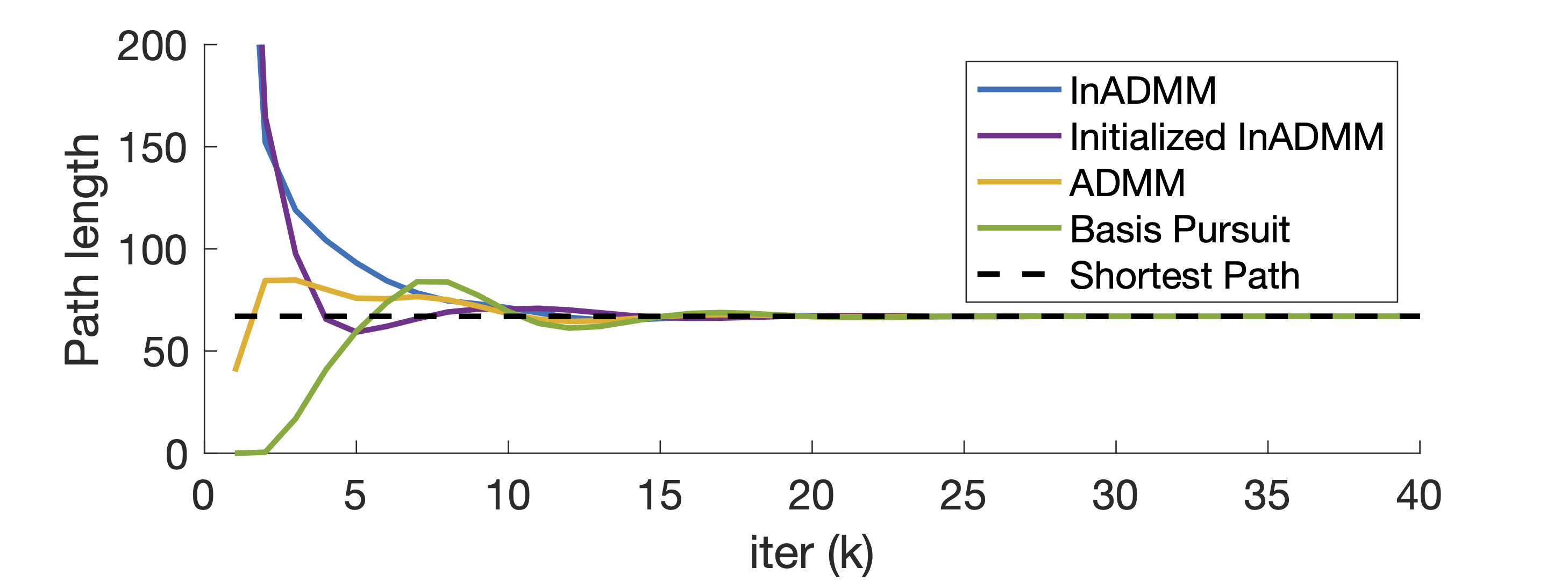}
\caption{Example based on the image ($ n = 4422$ and $m = 17291$): ``Edge in image'' identified via InADMM, InADMM with initializer, ADMM, and Basis pursuit. These converge in $36$, $34$, $29$, $47$ steps, respectively, with running times $1.7308$, $1.7618$, $4.0249$, $35.7527$ seconds in MATLAB clock.
Also, the running times for solving the linear program \eqref{eq:linprog1} using three methods ({\em dual-simplex and interior-point(-legacy)}) are $26.6145$, $29.3866$, $32.2845$ seconds.}
\label{fig:Convergence}
\end{figure}

\subsection{\blue Example 2: Route planning on road networks}\label{subsec:roads}
\blue
Next, we study \emph{``route planning''} on urban road maps, a standard task in transportation. This induces an \emph{undirected road graph} with intersections as vertices and street segments as edges. Edge weights are travel times, computed from segment length and nominal speed, with turn costs at busy junctions. The minimum-time trip between two addresses is then the shortest path on this weighted graph. To highlight contrasting geometries, we use city-scale OpenStreetMap \cite{yap2023global} extracts for Athens ($n{=}1009$, $m{=}2340$) and Amsterdam ($n{=}1031$, $m{=}2037$). The recovered routes, computed by Dijkstra (yellow), ADMM (red), and InADMM (green), are shown in Fig.~\ref{fig:road}. These graphs are sparse and close to planar, so effective routes rise to arterial and return to local streets near the destination.

We build the signed incidence from the road graph and use travel–time edge weights. The regularization is set to $\lambda=10^{-4}\lambda_{\max}$, where $\lambda_{\max}$ is the standard lasso threshold. ADMM uses a penalty $\rho=1$ and over-relaxation $\alpha=1.8$ with a sparse Cholesky solver. InADMM keeps the same splitting and replaces the direct solver with a preconditioned conjugate-gradient step on the normal equations (diagonal preconditioner, relative tolerance $10^{-8}$, maximum $2000$ iterations). To avoid degeneracies and ensure a unique $s$–$t$ route, we apply a one-time strictly positive perturbation to all edge weights and use the same perturbed weights for Dijkstra, ADMM, and InADMM.

% We form the signed incidence $D\in\mathbb{R}^{n\times m}$ and let $w$ be the travel–time edge weights with $W=\mathrm{diag}(w)$ and $Q=D\,W^{-1}$. We take $\lambda_{\max}=\|Q^\top y\|_\infty$ and use $\lambda=10^{-4}\lambda_{\max}$. ADMM follows the standard lasso splitting with $\rho=1$ and over-relaxation $\alpha=1.8$. The $z$–update solves $(Q^{\top}Q+\rho I)z=Q^{\top}y+\rho(a-v)$ via sparse Cholesky. InADMM applies the matrix identity and solves $(QQ^{\top}+\rho I)\eta=Qh$ with preconditioned CG ($M=\mathrm{diag}(QQ^{\top})+\rho I$, tolerance $10^{-8}$, max $2000$ iterations), then sets $z=(h-Q^\top\eta)/\rho$. To suppress degeneracy and enforce a unique $s$–$t$ route, we perturb all weights once by
% $w \leftarrow w + \eta + \varepsilon_{\mathrm{tb}}$ with $\eta\sim\mathrm{Unif}\!\big[0,\,10^{-3}\max_e w_e\big]$ and $\varepsilon_{\mathrm{tb}}=10^{-12}\max_e w_e$, and use for Dijkstra, ADMM, and InADMM.
\black
\begin{figure}[htb!]
\centering
\setlength{\tabcolsep}{1pt}
\renewcommand{\arraystretch}{1.0}
\begin{tabular}{R{0.1\columnwidth} M{0.4\columnwidth} M{0.5\columnwidth}}
    \text{\small Dijkstra} &
    \includegraphics[height=0.29\columnwidth]{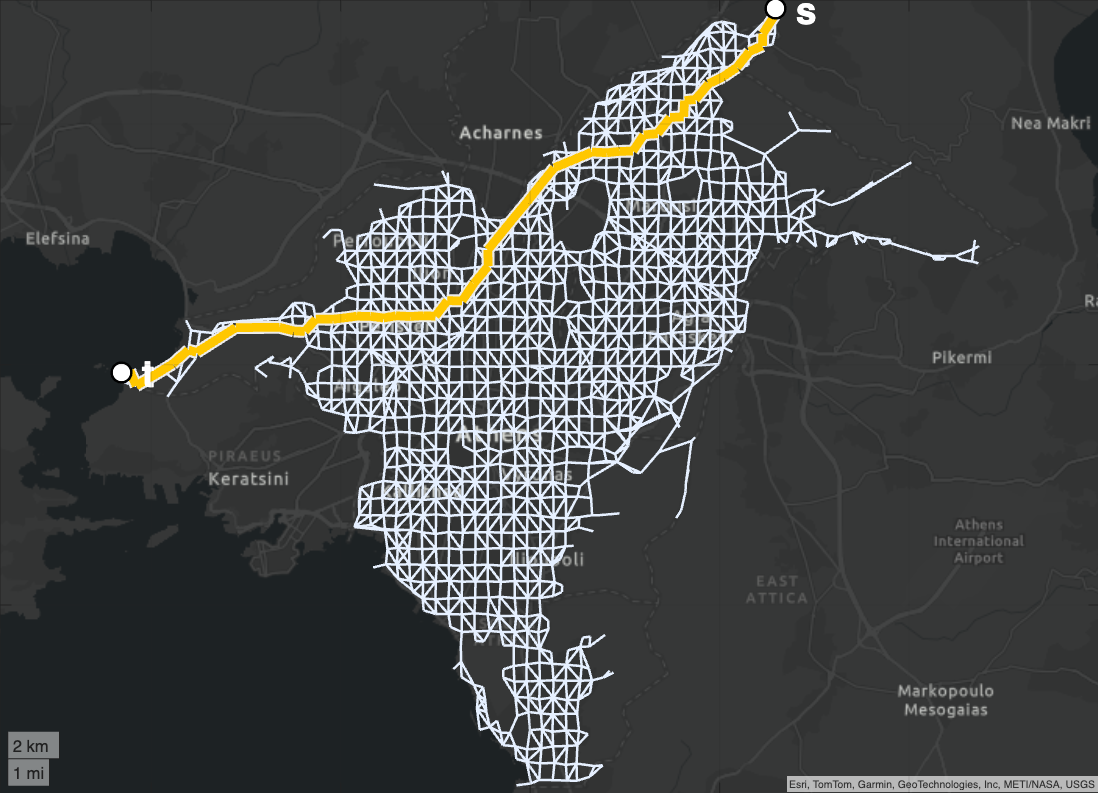} &
    \includegraphics[height=0.29\columnwidth]{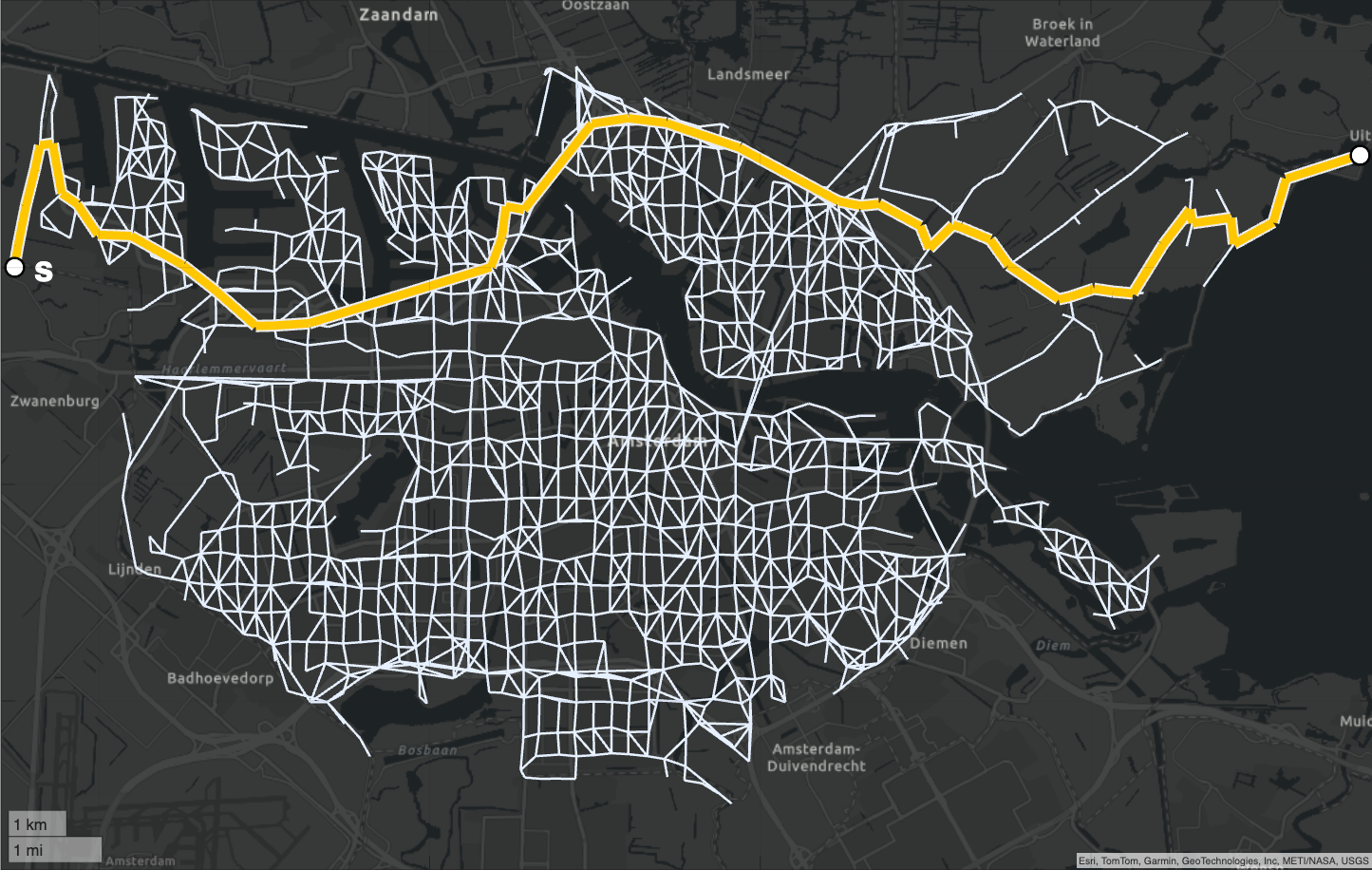} \\
    \text{\small ADMM} &
    \includegraphics[height=0.29\columnwidth]{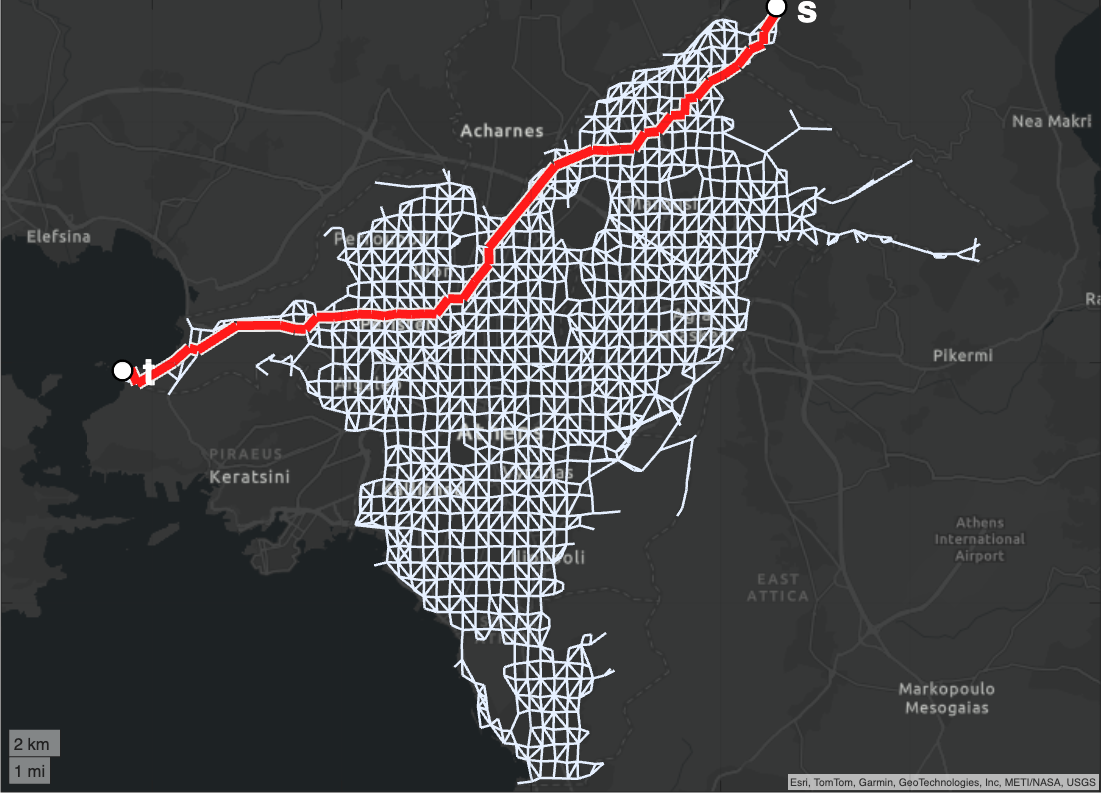} &
    \includegraphics[height=0.29\columnwidth]{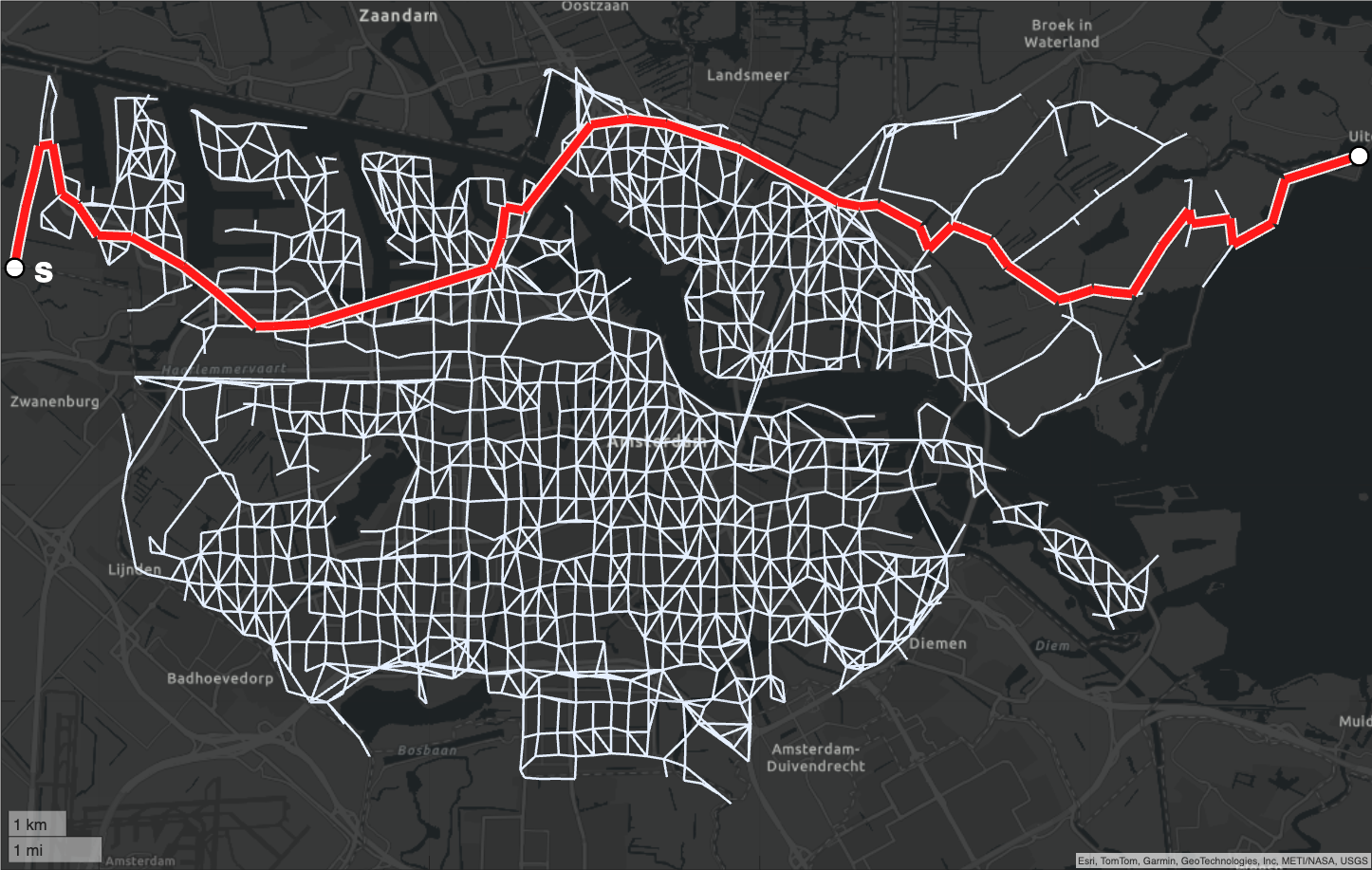} \\
    \text{\small InADMM} &
    \includegraphics[height=0.29\columnwidth]{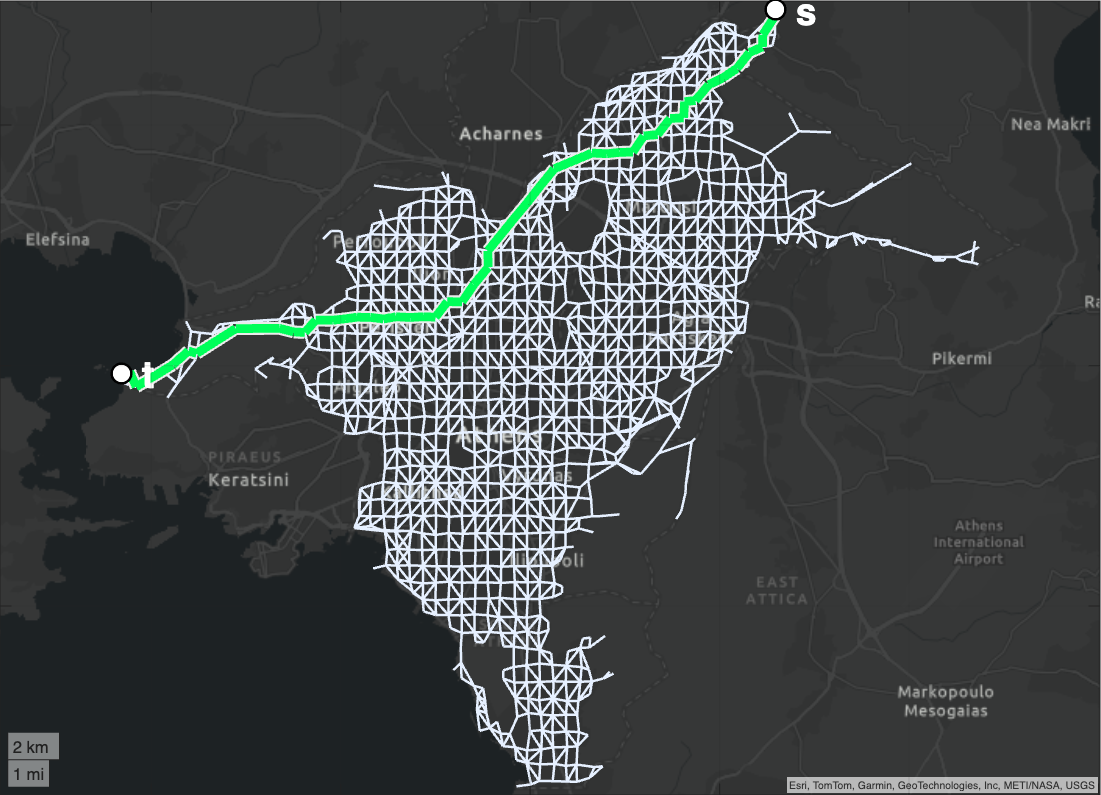} &
    \includegraphics[height=0.29\columnwidth]{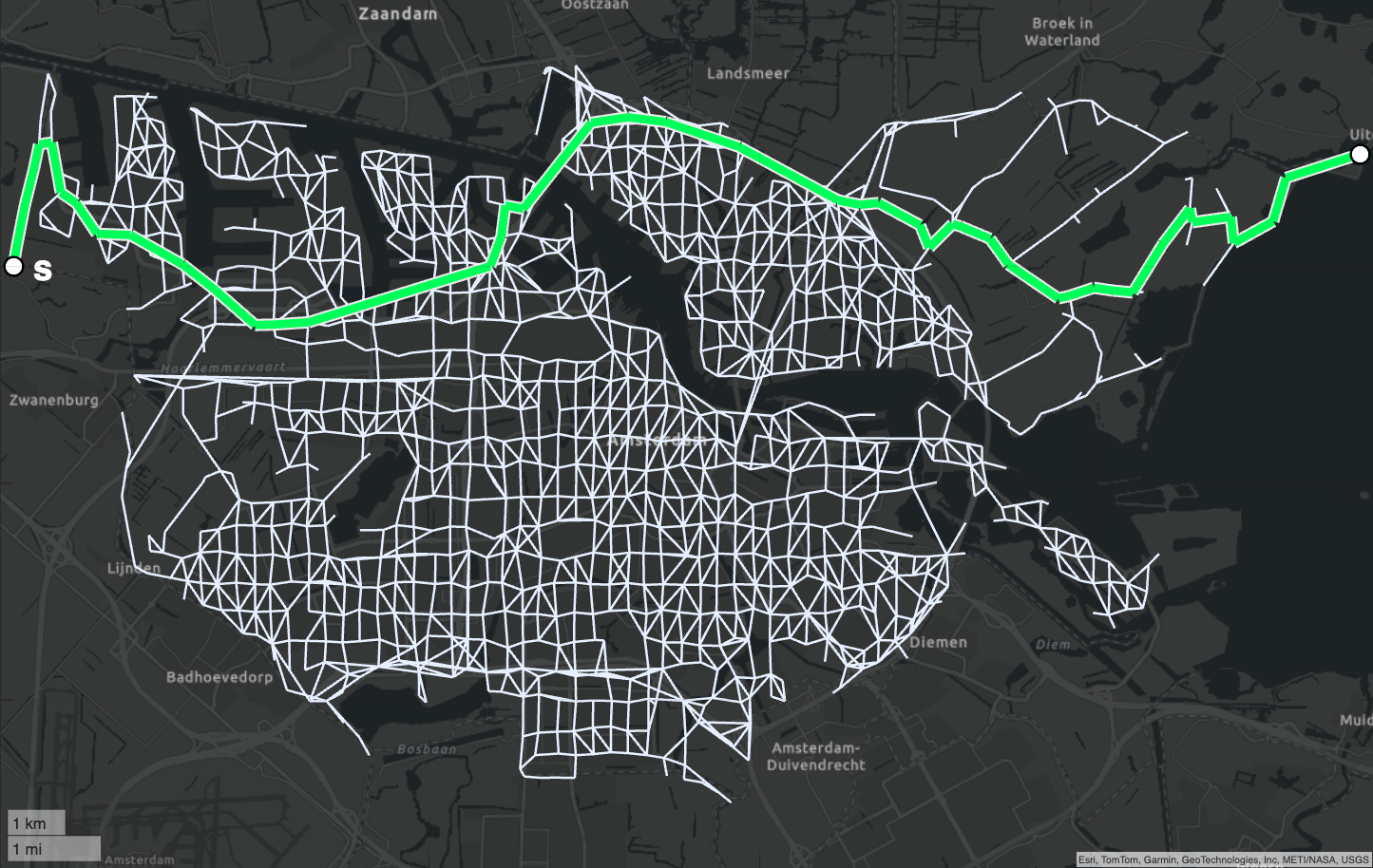} \\
\end{tabular}
\caption{\blue ``Minimum-time trip'' planning on Athens and Amsterdam road networks. Shortest paths from Dijkstra (yellow) and supports of the lasso solution $\beta$ recovered by ADMM (red) and InADMM (green) are overlaid.}
\label{fig:road}
\end{figure}
\blue 
Then, for scalability, we repeat the experiment on the Amsterdam map with finer segmentation at two larger sizes, $n=4699,\,m=8270$ and $n=9379,\,m=14571$ (see Fig.~\ref{fig:scalibility-road}). With the same settings as in Fig.~\ref{fig:road} and $\lambda=10^{-6}\lambda_{\max}$, the InADMM lasso path remains efficient and robust, recovering essentially the same minimum–time corridor on the higher–resolution graphs. A complementary effect appears when we decrease the penalty to $\lambda=10^{-7}\lambda_{\max}$. The solution begins to place nonzero weights on several nearly equivalent routes, so the highlighted support widens to reflect multiple plausible arterials. The phenomenon is more pronounced at higher resolution, where finer segmentation introduces additional near-equivalences between the same source and destination, as in Fig.~\ref{fig:scalibility-road}.
\black

\begin{figure}[htb!]
\centering
\setlength{\tabcolsep}{1pt}
\renewcommand{\arraystretch}{1.0}
\begin{tabular}{M{0.5\columnwidth} M{0.5\columnwidth}}
    \includegraphics[height=0.318\columnwidth]{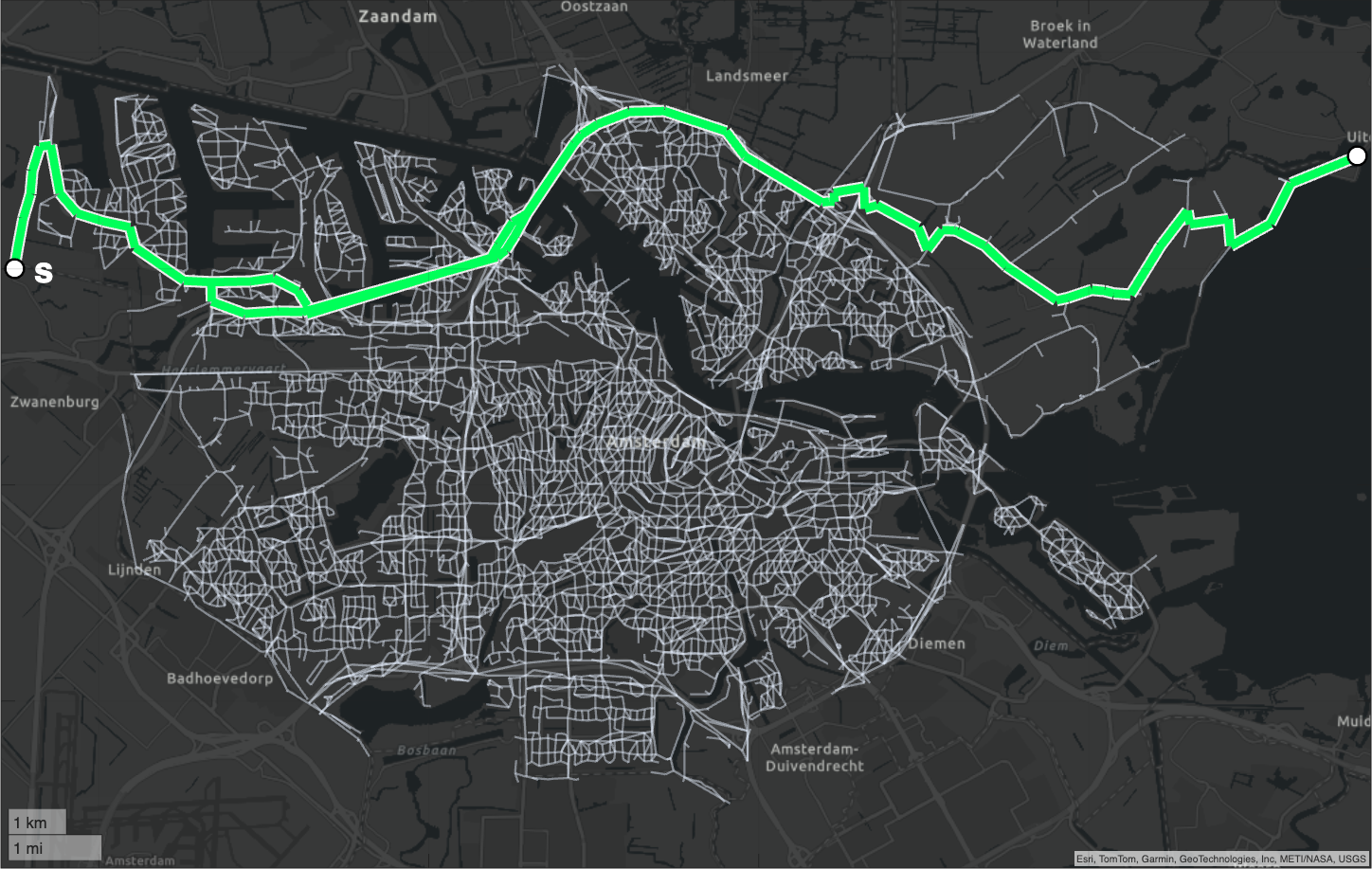} \subcaption{$\lambda=10^{-6}\lambda_{\max},\ n=4699$}&
    \includegraphics[height=0.318\columnwidth]{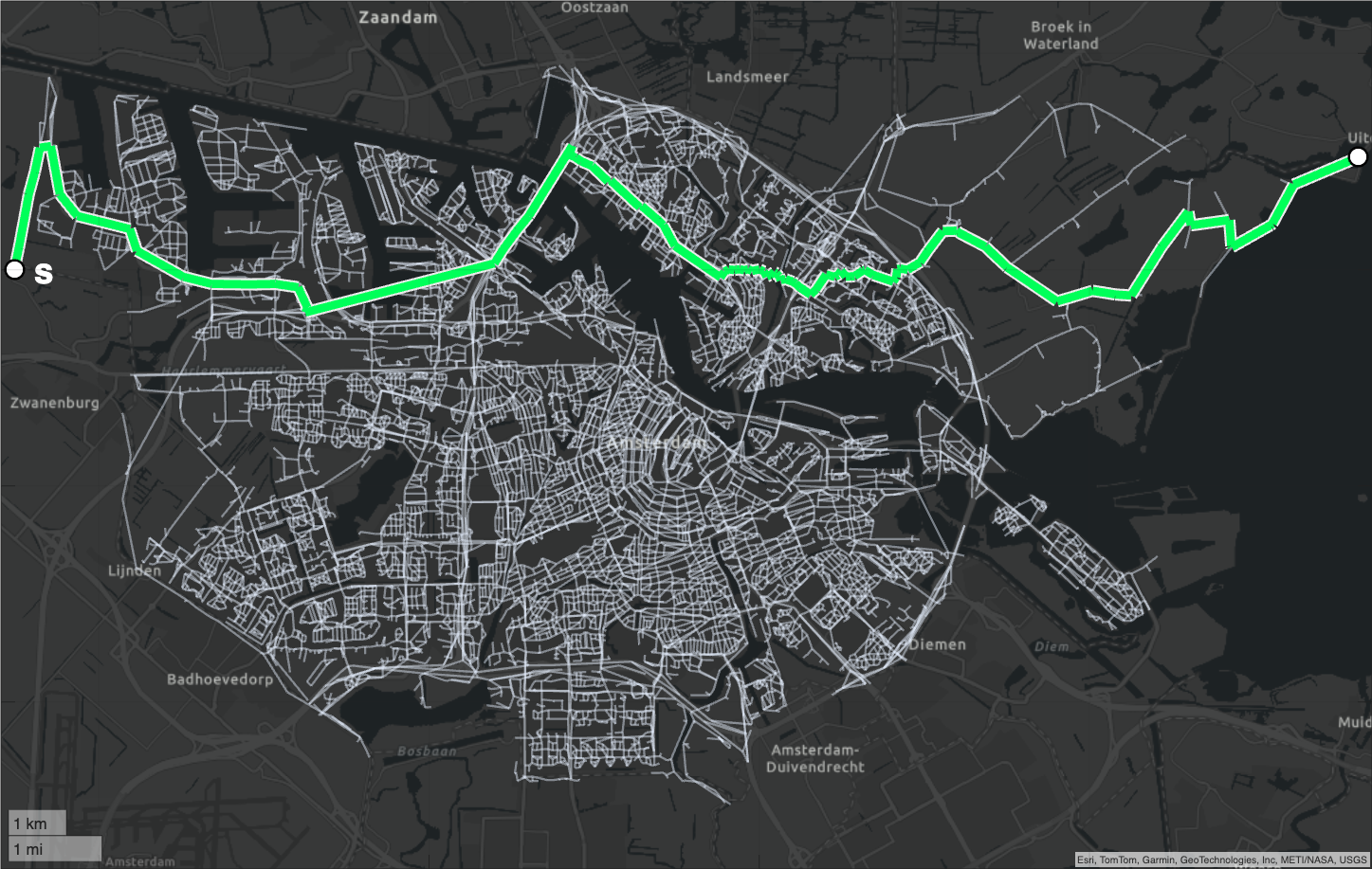} \subcaption{$\lambda=10^{-6}\lambda_{\max},\ n=9379$}\\[-1em]
    \includegraphics[height=0.318\columnwidth]{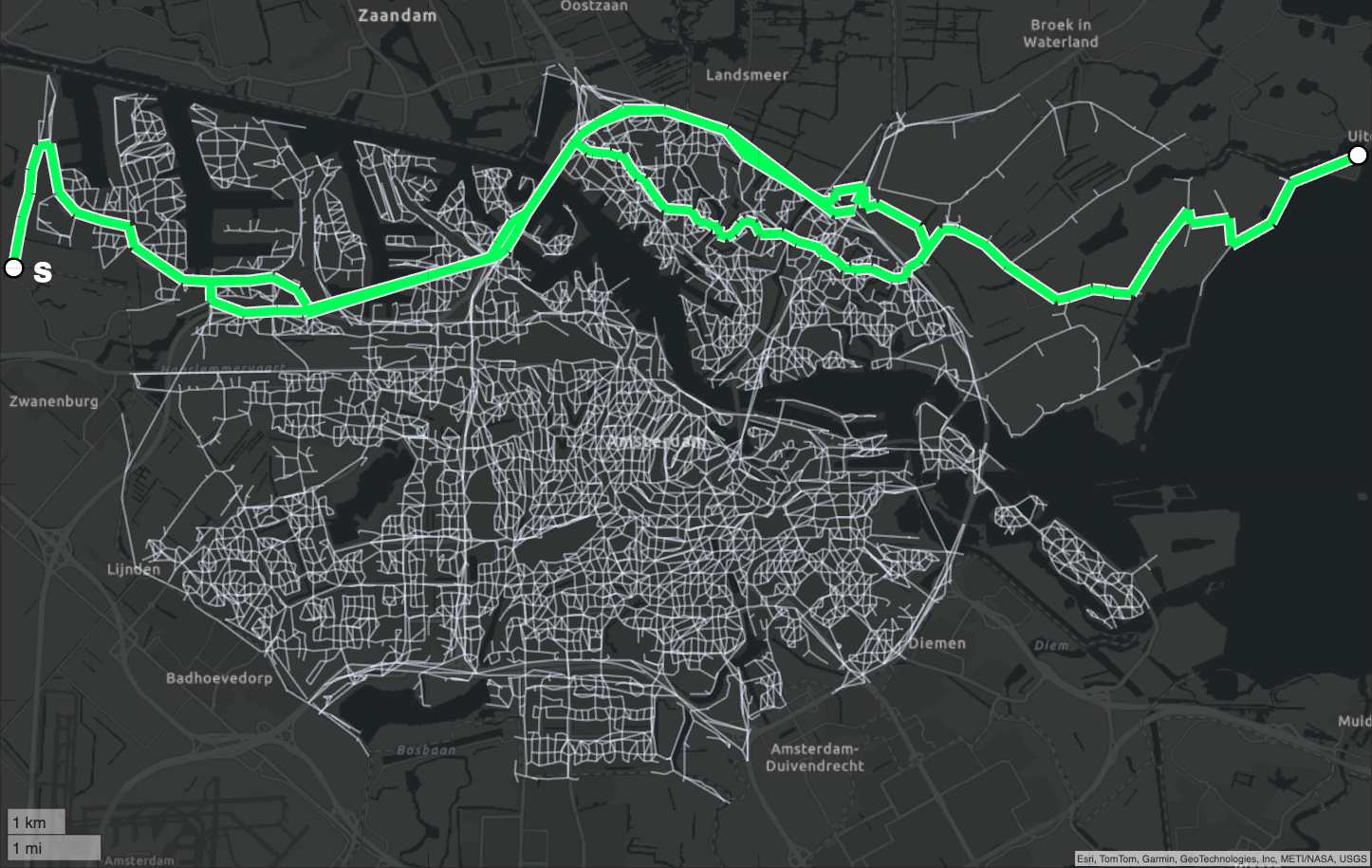} \subcaption{$\lambda=10^{-7}\lambda_{\max},\ n=4699$}&
    \includegraphics[height=0.318\columnwidth]{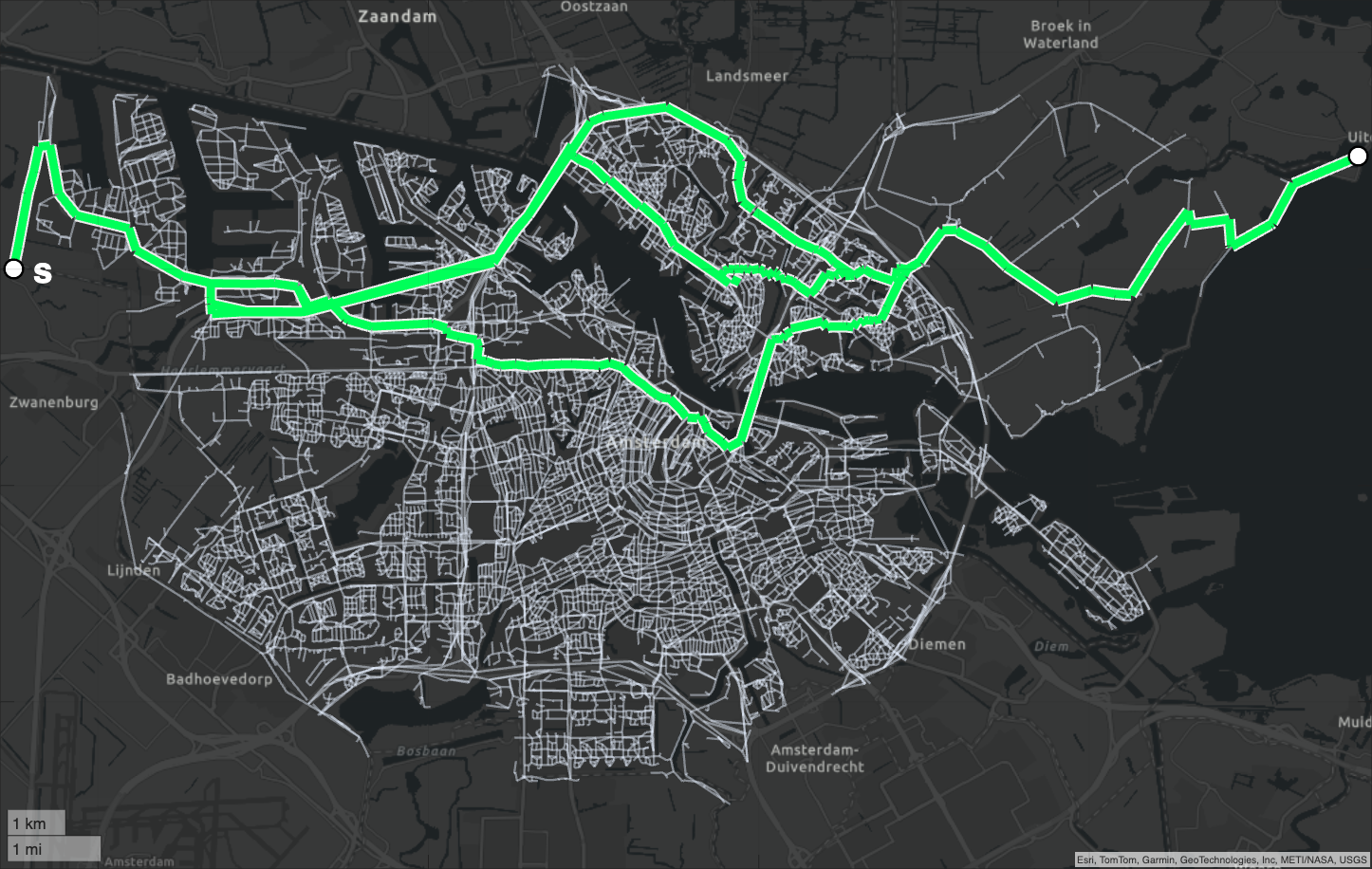} \subcaption{$\lambda=10^{-7}\lambda_{\max},\ n=9379$}\\
\end{tabular}
\caption{\blue Scalability and robustness on the same Amsterdam road map under finer segmentation. For $n=4699,\,m=8270$ and $n=9379,\,m=14571$, the InADMM lasso path with $\lambda=10^{-6}\lambda_{\max}$ recovers essentially the same minimum–time corridor. With $\lambda=10^{-7}\lambda_{\max}$, the support broadens, reflecting several nearly equivalent alternatives. All runs use identical weight jitter/tie–break and the same source–destination.}
\label{fig:scalibility-road}
\end{figure}

\subsection{\blue Example 3: Multi-hop shortest-path routing on random geometric networks}\label{subsec:rgg}
\blue 
Lastly, we build random geometric graphs for \emph{``multi-hop routing''}. We take the node set from the largest weakly connected component of the CiteSeerX network (IDs only, citation edges ignored) \cite{nr-aaai15}, place each node uniformly at random in $[0,1]^2$, and connect pairs whose Euclidean distance is at most a radius $r$. We increase $r$ until the giant component contains at least $98\%$ of the nodes, then keep that component ($n=2997$, $m=14149$).

Edge costs model transmit effort as squared Euclidean distance ($\alpha=2$). To avoid degeneracies and ensure a unique $s$–$t$ route, we apply a one-time strictly positive perturbation to all edge weights and use the same perturbed weights for Dijkstra, ADMM, and InADMM. The source $s$ is the left-most node; the target $t$ is the node with the largest finite weighted distance from $s$ under these weights, which yields a long left-to-right route that threads dense patches and skirts voids. All remaining solver settings (choice of $\lambda$, ADMM/InADMM parameters, and stopping criteria) match Example~2.
% Edge costs encode transmit effort via squared Euclidean distance (exponent $\alpha=2$). To remove degeneracies and make the $s$ to $t$ route unique, we apply a one-time, strictly positive perturbation to every edge weight: a small multiplicative jitter ($\mu=2\times 10^{-2}$), a small additive jitter proportional to the maximum weight ($\nu=10^{-3}$), and a tiny deterministic tie-breaker ($\varepsilon_{\mathrm{tb}}=10^{-12}$ times the maximum weight). The same perturbed weights are used for Dijkstra, ADMM, and InADMM.We choose $s$ as the left-most node and $t$ as the node with the largest finite weighted distance from $s$ under these weights, which yields a long left-to-right route that threads dense patches and skirts voids. All remaining solver settings (choice of $\lambda$, ADMM/InADMM parameters, and stopping criteria) match Example~2.
We then repeat the same construction at two larger scales, yielding
$(n,m)=(5997,\,30918)$ and $(n,m)=(8989,\,48819)$. As before, $s$ is chosen as the left-most node and $t$ as the node with the largest finite weighted distance from $s$. Across both sizes, the recovered lasso support (InADMM) coincides with the Dijkstra shortest path at our display thresholds, no side branches appear, and the route remains unique by construction.

\begin{figure}[htb!]
\centering
\begin{subfigure}{0.3\columnwidth}	
\centering
\includegraphics[width=\linewidth]{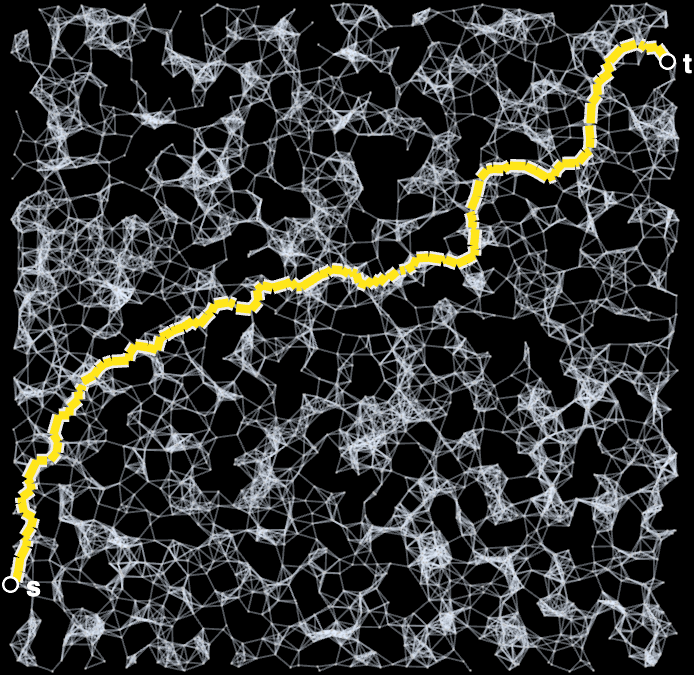}\vspace{-.7pt}
\includegraphics[width=\linewidth]{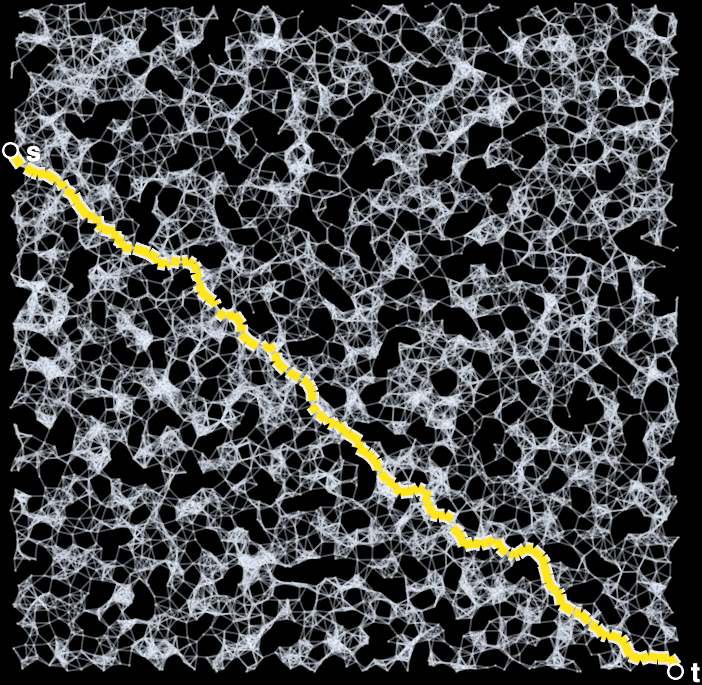}\vspace{-.7pt}
\includegraphics[width=\linewidth]{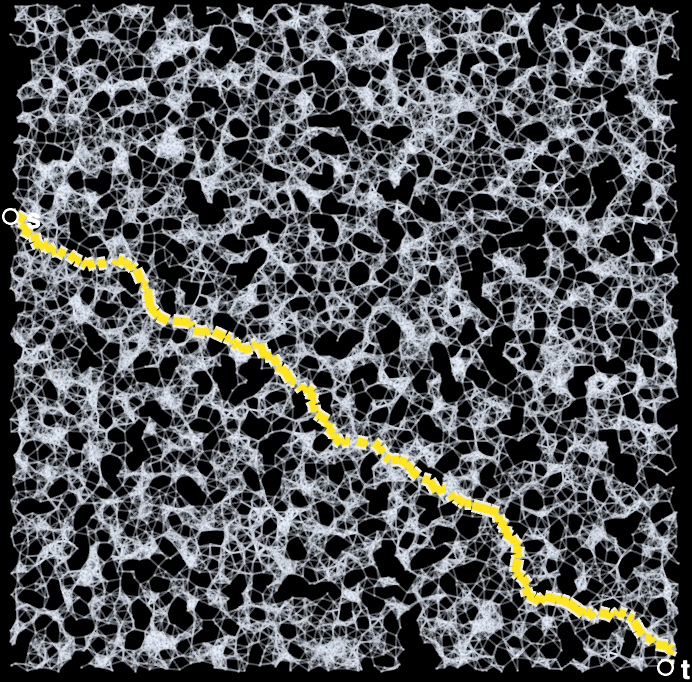}
\caption{Dijkstra}
\end{subfigure}\hspace{-2.3pt}
\begin{subfigure}{0.3\columnwidth}
\centering
\includegraphics[width=\linewidth]{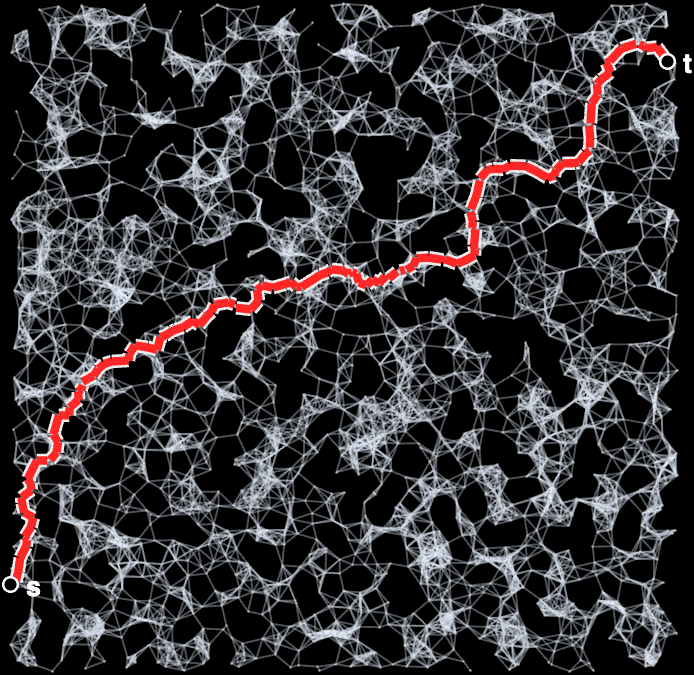}\vspace{-.7pt}
\includegraphics[width=\linewidth]{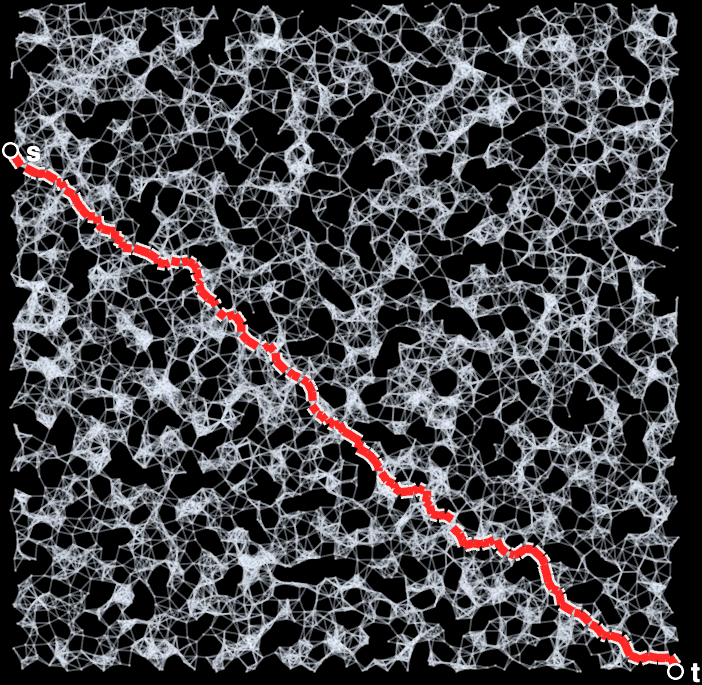}\vspace{-.7pt}
\includegraphics[width=\linewidth]{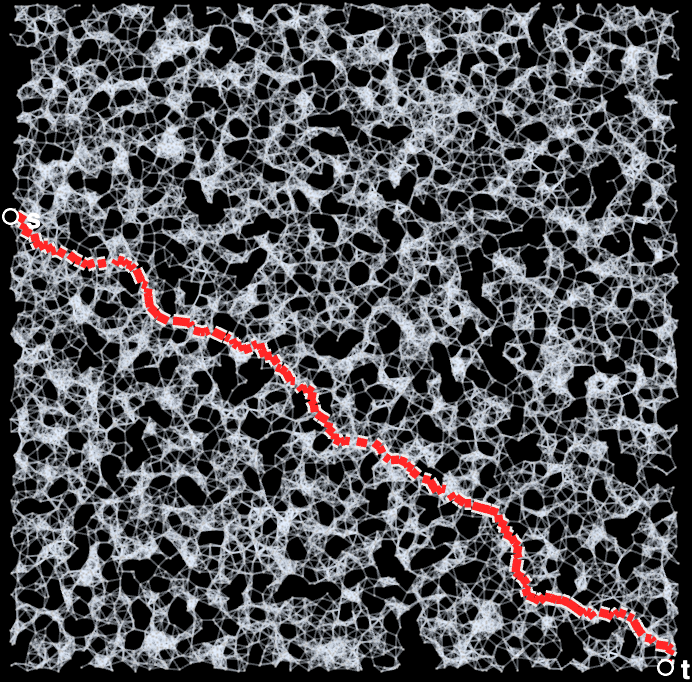}
\caption{ADMM}
\end{subfigure}\hspace{-2.3pt}
\begin{subfigure}{0.3\columnwidth}
\centering
\includegraphics[width=\linewidth]{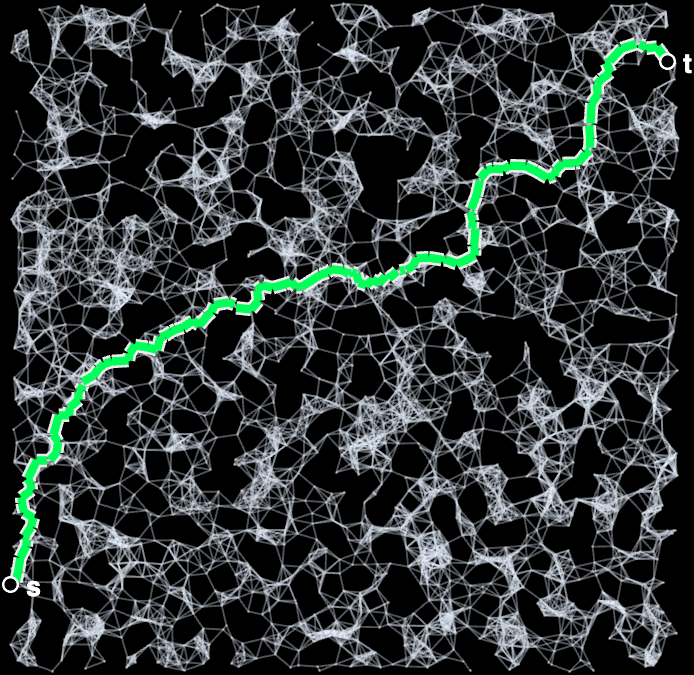}\vspace{-.7pt}
\includegraphics[width=\linewidth]{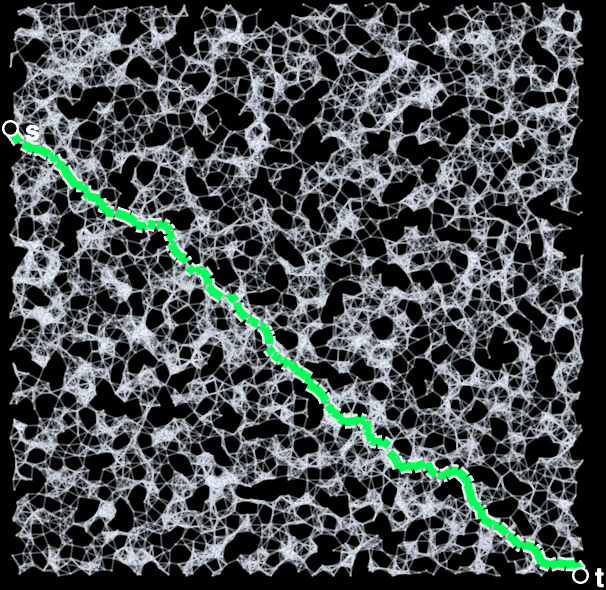}\vspace{-.7pt}
\includegraphics[width=\linewidth]{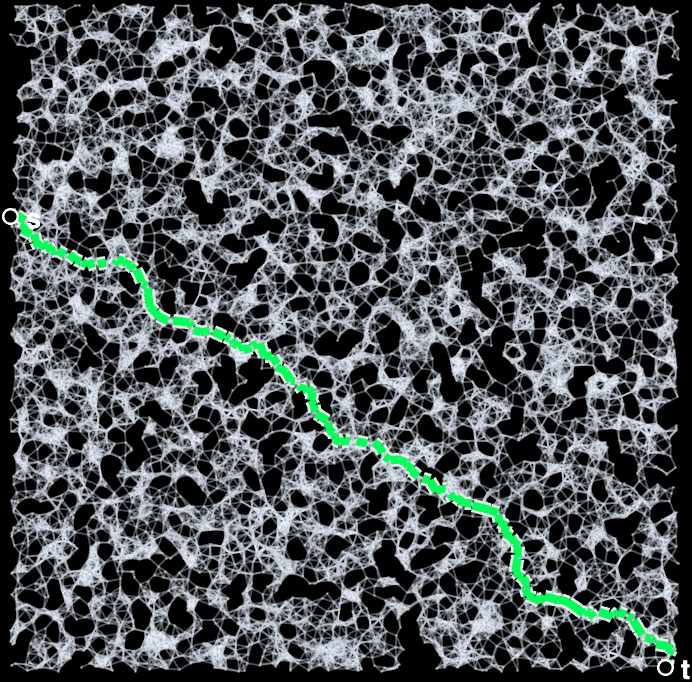}
\caption{InADMM}
\end{subfigure}
\caption{\blue Multi-hop routing on random geometric graphs. Shortest path from Dijkstra (yellow) and supports of the lasso solution $\beta$ recovered by ADMM (red) and InADMM (green) are overlaid. From top to bottom, graph size increases: top row $(n,m)=(2997,14149)$, middle row $(5997,30918)$, bottom row $(8989,48819)$.}
\label{fig:rgg}
\end{figure}
\black

\section{Concluding remarks}
The main contribution in this work is to point out that short paths in graphs, sought via formulation as a lasso problem, has computational and implementation advantages. Specifically, the use of ADMM algorithms for large graphs reduces computational complexity and allows for distributed implementation. Extension of the framework to one that can cope with negative weights is desirable, but at present, not available.

\section*{Acknowledgment}
This research has been supported in part by the NSF under ECCS-2347357, AFOSR under FA9550-24-1-0278, and ARO under W911NF-22-1-0292.

\appendix
\section*{Appendix}
\section{Proof of Proposition~\ref{prop:crossing}} \label{apdx:t-cross-derivation}
For simplicity, we drop the iteration subscript $k$ in our derivations. 
$D_\calA$ is the incidence matrix formed by the edges in the active set. The graph formed by $\calA$ consists of two disjoint trees $\Ts$, $\Tt$, and the set of isolated vertices $\Omega$. We decompose the rows of matrix $D_\calA$ into rows corresponding to these three subsets and express $D_\calA$ and $D_\calA^{+}$ according to
\begin{align*}
D_{\calA} = 
\begin{bmatrix}
D_{\calA^{(s)}} & \mathbf{0} \\
\mathbf{0} &  \mathbf{0} \\
\mathbf{0} &  D_{\calA^{(t)}}
\end{bmatrix},
\end{align*}
and
\begin{align*}
D_\calA^+ 
= 
\begin{bmatrix}
D_{\calA^{(s)}}^+ & \mathbf{0} & \mathbf{0} \\
\mathbf{0} & \mathbf{0} & D_{\calA^{(t)}}^+
\end{bmatrix},
\end{align*}
with $D_{\calA^{(s)}}, D_{\At}$ are the incidence matrix for the tree $\Ts$ and $\Tt$ respectively, and $\mathbf{0}$ represents all-zeros matrices of appropriate dimensions. We use this expression and Lemma~\ref{lem:incidence} to compute $a$ and $b$ by their definition~\eqref{eq:a-b}
\begin{align*}
a 
&=(Q_{\calA}^{T}Q_{\calA})^{+}Q_{\calA}^{T}y 
= Q_A^+y=W_\calA D_\calA^+y \\
&=
W_{\calA}
\begin{bmatrix}
D_{\calA^{(s)}}^+ &\mathbf{0} &\mathbf{0} \\
\mathbf{0}  &\mathbf{0}  & D_{\calA^{(t)}}^+  
\end{bmatrix}
\begin{bmatrix}
1 \\
\mathbf{0} \\
-1
\end{bmatrix}
\\
&= \displaystyle
\begin{bmatrix}
\displaystyle -\frac{1}{|\Ts|}W_{\As} P^{(s)} \mathbbm{1}_{|\Ts|} \\
\displaystyle +\frac{1}{|\Tt|}W_{\At} P^{(t)} \mathbbm{1}_{|\Tt|}
\end{bmatrix},
\end{align*}
where $P^{(s)}$ and $P^{(t)}$ are the path matrix for tree $\Ts$ and $\Tt$ respectively. Then,
\begin{align*}
b
&= (Q_\calA^TQ_\calA)^+ s \\
&= W_{\calA}D_{\calA}^{+} (D_{\calA}^{T})^{+}W_{\calA}s\\
& = \begin{bmatrix}
\displaystyle -W_{\As}\Big(P^{(s)} \calL^{(s)} -\frac{1}{|\Ts|}P^{(s)} \mathbbm{1}_s \mathbbm{1}_s^T\calL^{(s)}\Big)\\
\displaystyle +W_{\At}\Big(P^{(t)} \calL^{(t)} -\frac{1}{|\Tt|}P^{(t)} \mathbbm{1}_t  \mathbbm{1}_t^T\calL^{(t)}\Big)
\end{bmatrix},
\end{align*}
where $\calL^{(s)} \triangleq -(P^{(s)})^T W_{\calA^{(s)}} s$ is a vector of size $|\Ts|$ corresponding to vertices in the tree $\Ts$. The component of $\calL^{(s)}$,  corresponding to vertex $v \in \Ts$, is equal to $l_v^{(s)}$, i.e. the length of the path from  $v$ to the root $s$. The vector $\calL^{(t)} \triangleq (P^{(t)})^T W_{\calA^{(t)}} s$ has a similar interpretation, but for vertices of tree $\Tt$ {  and the difference of the definitions of $\calL^{(s)}$ and $\calL^{(t)}$ is resulted in the different signs of the two elements in $b$}. 

Putting the results for vectors $a$ and $b$ together, the ratio $a_j/b_j$ for $e_j \in \calA^{(s)}$ is
\begin{align*} 
\displaystyle \frac{a_j}{b_j} = \frac{\frac{1}{|\Ts|}w_j|R_j|}{w_j(\sum_{v\in R_j}l_v^{(s)} - \frac{|R_j|}{|\Ts|} \sum_{v\in \Ts} l_v^{(s)})},
\end{align*}
where $R_j$ is the set of non-zero components of the $j$th row of $P^{(s)}$ since the same edge $e_{j}$ shares the same direction along all the paths in a tree. This concludes our proof for $e_j \in \calA^{(s)}$. The derivation for $e_j \in \calA^{(t)}$ is similar.

\section{Proof of Proposition~\ref{prop:joining}}\label{apdx:t-join-derivation}
By definition of joining time~\eqref{eq:join}, for all $e_j \in \calE$, reads

\begin{align}\label{eq:t-join-apdx}
t^{\jn}_j = \frac{\frac{1}{w_j}D_j^T(Q_\calA a - y)}{\frac{1}{w_j}D_j^T(Q_\calA b) \pm 1} = {  \frac{D_j^T(Q_\calA a - y)}{D_j^T(Q_\calA b) \pm w_{j}}},
\end{align}
with the choice $\pm$ dictated by $t^{\jn}_j$ taking positive value.
We now obtain expressions for the terms in parentheses. First, the term $(Q_\calA a - y)$ in the numerator equals to
\begin{align}
D_\calA D_\calA^+ y - y 
&=  
\begin{bmatrix}
D_{\calA^{(s)}} & \mathbf{0} \\
\mathbf{0} &  \mathbf{0} \\
\mathbf{0} &  D_{\calA^{(t)}}
\end{bmatrix}
\begin{bmatrix}
D_{\calA^{(s)}}^+ & \mathbf{0} & \mathbf{0} \\
\mathbf{0} & \mathbf{0} &  D_{\calA^{(t)}}^+
\end{bmatrix}y - y \nonumber\\
&= 
\begin{bmatrix}
-\frac{1}{|\Ts|}\mathbbm{1}_s\\
\mathbb{0}_{|\Omega|}\\
+\frac{1}{|\Tt|}\mathbbm{1}_t
\end{bmatrix}, \label{eq:numerator}
\end{align}
where we used $$D D^+=I - \frac{1}{\mathbbm{1}^T\mathbbm{1}}\mathbbm{1}\mathbbm{1}^T$$ for incidence matrices $D_{\As}$ and $D_{\At}$ of the two disjoint trees{ ~\cite[Lemma 2.15]{bapat2010graphs}}, {  and $\mathbb{0}_{|\Omega|}$ denotes all-zero (column) vectors of size $|\Omega|$}. Second, the term $(Q_\calA b)$ in the denominator equals to $(D_\calA D_\calA^+ (D_\calA^T)^+ W_\calA s)$ and moreover, it equals to 
\begin{align}
(D_\calA^+)^T W_\calA s 
=
\begin{bmatrix}
\displaystyle -\calL^{(s)} + \frac{1}{|\Ts|} \mathbbm{1}_s \mathbbm{1}_s^T\calL^{(s)}\\
\mathbb{0}_{|\Omega|}\\
\displaystyle \calL^{(t)} -\frac{1}{|\Tt|} \mathbbm{1}_t  \mathbbm{1}_t^T\calL^{(t)}
\end{bmatrix}.
\end{align}
\blue where $\calL^{(s)}$ (resp. $\calL^{(t)}$) collects the root-to-vertex distances $l_v^{(s)}$ on $T^{(s)}$ (resp. $l_v^{(t)}$ on $T^{(t)}$).\black

Evaluating the expression in~\eqref{eq:t-join-apdx} for edge $e_j = (v_1,v_2)\in \calA^{c}$ and without loss of generality, assume the assigned direction is from $v_1$ to $v_2$, the joining time is categorized into three separate cases.

{\noindent \bf {\em i})} If $(v_1,v_2) \in \Omega^2  \cup  {T^{(s)}}^2 \cup {T^{(t)}}^2$, i.e., the two ends of $e_{j}$ belongs to or isolated from the trees, we have $t_{j}^{\jn} = 0$ by the numerator~\eqref{eq:numerator} and the definition of $D_{j}^{T}$.

{\noindent \bf {\em ii})} If $(v_1,v_2) \in (T^{(s)}\times\Omega)\cup (T^{(t)}\times\Omega)$, Without loss of generality, assume $v_1 \in T^{(s)}$ \blue and $v_2\in\Omega$, using the blocks above, we have
\[
D_j^\top(Q_\calA a-y)=\tfrac{1}{|\Ts|},\qquad
D_j^\top(Q_\calA b)=\,l^{(s)}_{v_2}-\frac{1}{|\Ts|}\sum_{v\in T^{(s)}} l_v^{(s)}.
\]\black
The joining now reads
\begin{align*}
t_j^\jn 
&=\big(|\Ts|l^{(s)}_{v_1} - \mathbbm{1}^{T}\calL^{(s)}_{v} \pm |\Ts|w_{j}\big)^{-1}\\
&=\Big(|T^{(s)}|(l^{(s)}_{v_1}+w_{j}) - \mathbbm{1}^{T}\calL^{(s)}_{v}\Big)^{-1}\\
&{\blue =\Big(|\Ts|\,l^{(s)}_{v_2}-\sum_{v\in T^{(s)}} l_v^{(s)}\Big)^{-1}}.
\end{align*}
(The expression is equivalent to using $l^{(s)}_{v_2}=l^{(s)}_{v_1}+w_j$ upon joining.)

%%%%%%%%%%%%%%%%%%%%%%%%%%%%%%%%%
\blue
{\noindent \bf {\em iii})} If $(v_1,v_2)\in T^{(s)}\times T^{(t)}$, then
\[
D_j^\top(Q_\calA a-y)=\frac{1}{|\Ts|}+\frac{1}{|\Tt|},
\quad
D_j^\top(Q_\calA b)=\Big(l^{(s)}_{v_1}-\overline{l^{(s)}}_{T^{(s)}}\Big)+\Big(l^{(t)}_{v_2}-\overline{l^{(t)}}_{T^{(t)}}\Big),
\]
where $\overline{l^{(s)}}_{T^{(s)}}=\displaystyle \frac{1}{|\Ts|}\sum_{v\in T^{(s)}} l_v^{(s)}$ and
$\overline{l^{(t)}}_{T^{(t)}}=\displaystyle \frac{1}{|\Tt|}\sum_{v\in T^{(t)}} l_v^{(t)}$.
At the connection step, for the edge that realizes the maximum joining time we have
$l^{(s)}_{v_1}+w_j+l^{(t)}_{v_2}=l_t^{(s)}$ (i.e., the $s$–$t$ distance),
hence the denominator in~\eqref{eq:t-join-apdx} equals
\[
l_t^{(s)}-\overline{l^{(s)}}_{T^{(s)}}-\overline{l^{(t)}}_{T^{(t)}}
=\frac{\Delta}{|\Ts|\,|\Tt|},
\]
with
\[
\Delta
=|\Ts|\,|\Tt|\,l_t^{(s)}-|\Tt|\sum_{v\in T^{(s)}} l_v^{(s)}-|\Ts|\sum_{v\in T^{(t)}} l_v^{(t)}.
\]
Therefore
\[
t_j^\jn=\frac{\frac{1}{|\Ts|}+\frac{1}{|\Tt|}}{\frac{\Delta}{|\Ts|\,|\Tt|}}
=\frac{|\Ts|+|\Tt|}{\Delta}.
\]
\black

\section{Proof of Lemma~\ref{lemma:uni}} \label{apdx:proof-lemma-3.2}
The proof is based on the sufficient condition for the uniqueness of the lasso solution\cite[Lemma 2]{tibshirani2013lasso}, which states that ``{\em For any $\beta$,~$Q$, and $\lambda>0$, if} $\mbox{null}(Q_\calA)=\{0\}$ ({\em or equivalently if }$\mbox{rank}(Q_{\calA})$ = $|\calA|$), {\em the lasso solution and the active set $\calA$ are always unique.''} 
Thus, we prove the lasso solution  $\beta(\lambda)$ is unique for every $\lambda > 0$ by showing that rank$(Q_{\calA})$ = $|\calA|$ is true for every $Q_{\calA}$.

In Section~\ref{sec:relation}, we showed that the active set forms two disjoint trees (Lemma~\ref{lemma:addactive} and \ref{lemma:cross}). Hence, 
\begin{align*}
Q_{\calA} = 
\begin{bmatrix}
Q_{\calA^{(s)}}, &Q_{\calA^{(t)}}    
\end{bmatrix} = 
\begin{bmatrix}
D_{\calA^{(s)}} W_{\calA^{(s)}}^{-1}, &D_{\calA_{}^{(t)}} W_{\calA_{}^{(t)}}^{-1}  
\end{bmatrix},
\end{align*}
with $D_{\calA_{k}^{(t)}}$ and $D_{\calA_{k}^{(t)}}$ are incidence matrices of two trees positively weighted by the weighted matrices $W_{\calA_{k}^{(s),(t)}}$. The {  null space} {(\em kernel)} of the incidence matrix of a tree is empty because there is no cycle in the tree by its definition, so the rank of its incidence matrix is equal to the number of columns, i.e., the rank of $Q_\calA$ is equal to $|\calA|$. Therefore, the sufficient condition for the unique lasso solution is satisfied.

\bibliographystyle{plain}
\bibliography{ref}

\end{document}